\newtheorem{theorem}{Theorem}[section]
\newtheorem{lemma}[theorem]{Lemma}
\newtheorem{cor}[theorem]{Corollary}
\theoremstyle{definition}
\newtheorem{defi}[theorem]{Definition}
\newtheorem{example}[theorem]{Example}
\theoremstyle{remark}
\newtheorem{remark}[theorem]{Remark}
\numberwithin{equation}{section}
\newcommand{\rr}{{\mathbb R}}
\newcommand{\Raum}{\mathbb{R}_{+} \times \mathbb{R}}
\newcommand{\raum}{\mathbb{R}_{+}}
\newcommand{\Rplus}{\mathbb{R}_{+}}
\newcommand{\ind}[1]{\mathds{1}_{#1}}
\newcommand{\Konvergenzcinfty}{\xrightarrow[c \rightarrow \infty]{}}
\newcommand{\wKonvergenz}{\xlongrightarrow{w}}
\newcommand{\WKonvergenz}{\xLongrightarrow[n \rightarrow \infty]{}}
\newcommand{\wKonvergenzcinfty}{\xrightarrow[c \rightarrow \infty]{w}}
\newcommand{\JKonvergenzc}{\xlongrightarrow[c \rightarrow \infty]{J_1}}
\newcommand{\Dpu}{\mathbb{D}_{\uparrow,u}}
\newcommand{\intnu}{\int_{0}^{\infty}}
\newcommand{\sumnu}{\sum_{n=0}^{\infty}}
\newcommand{\indK}[1]{\mathds{1}_{\left\{#1\right\}}}
\begin{document}
\title{Coupled Continuous Time Random Maxima}
\author{Katharina Hees and Hans-Peter Scheffler} 
\address{Katharina Hees, Institut für medizinische Biometrie und Informatik, Universit\"at Heidelberg, 69120 Heidelberg, Germany} 
\email{hees@imbi.uni-heidelberg.de}
\address{Hans-Peter Scheffler, Deparment Mathematik, Universit\"at Siegen, 57072 Siegen, Germany} 
\email{Scheffler@mathematik.uni-siegen.de}

\begin{abstract}
Continuous Time Random Maxima (CTRM) are a generalization of classical extreme value theory: Instead of observing random events at regular intervals in time, the waiting times between the events are also random variables with arbitrary distributions. In case that the waiting times between the events have infinite mean, the limit process that appears differs from the limit process that appears in the classical case. With a continuous mapping approach we derive a limit theorem for the case that the waiting times and the subsequent events are dependent and for the case that the waiting times dependent on the preceding events (in this case we speak of an Overshooting Continuous Time Random Maxima, abbr. OCTRM). We get the distribution functions of the limit processes and a formula for a Laplace transform for the CTRM and the OCTRM limit. With this formula we have another way to calculate the distribution functions of the limit processes, namely by inversion of the Laplace transform. Moreover we present governing equations, which are in our case time fractional differential equations whose solutions are the distribution functions of our limit processes. Because of the inverse relationship between the CTRM and its first hitting time we get also the Laplace transform of the distribution function of the first hitting time.
\end{abstract}
\maketitle

\section{Introduction}

Classical extreme value theory assumes that observations are collected at regular intervals, or at non random points in time. In some applications, random waiting times between observations are heavy tailed. For waiting times with regularly varying probability tails the mean waiting time can be infinite. In that case, the renewal process that counts the number of observations by time $t$ grows at a sub-linear rate, and the renewal theorem does not apply (see e.g. \cite{feller}, XI.5). 

This paper develops the limiting behavior of the rescaled extremal observation as $t$ tends to infinity. Complementary to \cite{Meerschaert2007a}, where it is assumed that the waiting times and the observations are independent, we allow arbitrary dependence between the $i$th waiting time and the $i$th observation. Silvestrov and Teugels \cite{ST98,ST04} developed a general theory for the joint behavior of sums and maxima in random observation times. However, they do not compute the CDF of the limit process, which is the main result of this paper, see Theorem \ref{Grenzwertverteilungsfunktion} below.
Pancheva and Jordanova \cite{panch04} specifically consider the case of infinite mean waiting times, by adapting arguments from \cite{CTRWinfinite}, along with a powerful transfer theorem (\cite{ST98}, Theorem 3). However in all those papers it is assumed that the waiting times and the observations are at least asymptotically independent. 

Allowing arbitrary dependence leads to various technical problems and the methods used in the above mentioned papers do not apply. Inspired by the well developed theory of (coupled) continuous time random walks, see \cite{Becker-Kern2004,CTRWtriangular,Govfrac}, using recent results on jointly sum/max-stable laws and their domains of attraction, presented in \cite{HeesScheffler1}, we solve this problem completely.

This paper is organized as follows: Section 2 lays out our basic assumptions, defines the processes we want to analyze and recalls some of the results in \cite{HeesScheffler1} needed in the formulation and proof of our main results. Section 3 presents process convergence  of the CTRM and OCTRM processes using a continuous mapping approach. In section 4 we present the main result of this paper: We derive closed formulas for the CDF of the CTRM and OCTRM limit processes at a fixed point $t$ of time and compute the Laplace transform in time of those CDFs. Finally in section 5 we explicitly compute two examples showing the usefulness of the developed theory. 

\section{Problem formulation and basic results}
Let $(W_i,J_i)_{i \in \mathbb{N}}$ be a sequence of iid $\Raum$-valued random variables modeling the $i$th waiting time and the corresponding observation. Observe that we allow arbitrary dependence between $W_i$ and $J_i$. Define \begin{align*}
S(n):=\sum_{i=1}^n W_i \text{ and } M(n):=\bigvee_{i=1}^n J_i
\end{align*} 
which is the time of the $n$-th observation resp. the maximum of the first $n$ observations. The associated partial sum-process S and the partial max-process M are defined by the paths $S(t):=S(\lfloor t \rfloor)$ and $M(t):=M(\lfloor t \rfloor)$. Furthermore we define $N$ to be the renewal process which paths are given by
\begin{align*}
N(t):=\max\left\{n \geq 0: S(n) \leq t \right\}.
\end{align*}
This process counts the number of observations until time $t$. Next we define the main processes of study of this paper.
\begin{defi} We call the process $V$ which is defined by
\begin{align*}
V(t):=M(N(t))=\bigvee_{i=1}^{N(t)}{J_i} 
\end{align*}
{\it Continuous Time Random Maxima} (CTRM). Furthermore we call the process $U$ defined by
\begin{align*}
U(t):=M(N(t)+1)=\bigvee_{i=1}^{N(t)+1}{J_i} 
\end{align*}
{\it overshooting Continuous Time Random Maxima (OCTRM)}.
\end{defi}
The process $V$ is the process which gives you the maximum observation that appears until time $t$. The OCTRM $U$ gives you also the maximum observation, but there you consider one additional jump. That means that in the OCTRM model, the waiting time $W_i$ is dependent of the subsequent jump size $J_i$. In section \ref{limitheorem} we prove a limit theorem for the long time behavior of this two processes. In order to do so, we need to make a natural assumption on the distribution of $(W,J)$. Namely, we assume that  there exist $a_n,b_n>0$ and $d_n\in \mathbb{R}$ such that 
\begin{align} 
(a_nS(n),b_n(M(n)-d_n) \WKonvergenz (D,A) \label{Annahme}
\end{align}
where $\Longrightarrow$ denotes convergence in distribution and $D$ and $A$ are assumed to be non-degenerated. Consequently the random variable $D$ is $\beta$-stable with $0<\beta<1$ and $A$ has an extreme value distribution, hence is Fr\'echet, Weibull or Gumbel distributed. If \eqref{Annahme} holds we say $(W,J)$ belongs to the {\it sum-max-domain of attraction} of $(D,A)$ and that $(D,A)$ is {\it sum-max stable}. A complete characterization of sum-max stable laws and their domains of attraction is presented in the recent paper \cite{HeesScheffler1}. Let us briefly recall some of the notations and results in \cite{HeesScheffler1} needed in the formulation and proofs of our main results in section 4. 

For a probability measure $\mu$ on $\rr_+\times\rr$ let  
\[ \mathcal L(\mu)(s,y)=\int_{\Rplus \times\rr} e^{-st} \ind{\left[-\infty,y\right]} (x) \mu(dt,dx)\]
for $s\geq 0$ and $y\in\rr$. $\mathcal L(\mu)$ is called the {\it CDF-Laplace transform} (C-L transform) of $\mu$. 

Let $F_A$ denote the distribution function of $A$ in \ref{Annahme} and $x_0$ its left endpoint, that is
\begin{align*}
x_0:=\inf\left\{y \in \rr: F_A(y)>0\right\}.
\end{align*} 
By Theorem 3.5 in \cite{HeesScheffler1} we know that the C-L transform of the limit distribution $P_{(D,A)}$ in \eqref{Annahme} can be written as 
\[ \mathcal L(P_{(D,A)})(s,y)=\exp(-\psi(s,y)) \]
for all $s\geq 0$ and $y>x_0$, where $\psi(s,y)$ is called {\it C-L exponent} and given by
\[ \psi(s,y)=\int_{\Rplus}\int_{\left[x_0,\infty\right)}\left(1-e^{-st}\cdot\ind{\left[x_0,y\right]}(x)\right)\Phi(dt,dx) \]
for some $\sigma$-finite measure $\Phi$ on $\rr_+\times(x_0,\infty)$ called the {\it L\'evy-exponent measure} of $(D,A)$. Explicit 
formulas for $\eta$ and $\psi$ are given in Theorem 3.8, Theorem 3.13 and Proposition 3.11 of \cite{HeesScheffler1}.
It follows that 
\begin{equation}\label{levyexpo}
 \Phi_D(dt)=\Phi(dt,\rr)\quad\text{and}\quad \Phi_A(dx)=\Phi(\rr_+,dx) 
\end{equation}
is the L\'evy measure of $D$ and the exponent measure of $A$, respectively.
Especially it is shown in
Corollary 3.10 of \cite{HeesScheffler1} that $D$ and $A$ in \eqref{Annahme} are independent if and only if
\begin{equation}\label{DAind}
\Phi(dt,dx)=\varepsilon_0(dt)\Phi_A(dx)+\varepsilon_0(dx)\Phi_D(dt). 
\end{equation}
Moreover
$\Psi_D(s)=\Psi(s,\infty)$
is the log-Laplace transform of $D$ that is 
\[\mathbb E(e^{-sD})=e^{-\Psi_D(s)} \] and 
$\Psi(0,y)=-\log F_A(y)$.

For the proof of the limit theorem for the long time behavior of our CTRM and OCTRM process under the condition \eqref{Annahme} we need the next result, which tells us that the joint convergence of the sum- and the max-process is equivalent to condition \eqref{Annahme}. It is well known that under the assumption that $W$ belongs to the domain of attraction of a $\beta$-stable random variable the partial sum-process converges in the $J_1$ topology with an appropriate scaling to a $\beta$-stable subordinator, i.e.
\begin{align*}
\left\{a(c)S(ct)\right\}_{t \geq 0} \xlongrightarrow[c \rightarrow \infty]{J_1} \left\{D(t)\right\}_{t \geq 0},
\end{align*}
see for example Theorem 7.1 and Corollary 7.1 in \cite{CTRWinfinite}. Similarly, it is a well known result that under the assumption that $J$ belongs to the max-domain of attraction of an extreme value distributed random variable $A$, that the partial max-process converges in the $J_1$ topology to a $F$-extremal process, i.e.
\begin{align*}
\left\{b(c)(M(ct)-d(c))\right\}_{t \geq 0} \xlongrightarrow[c \rightarrow \infty]{J_1} \left\{A(t)\right\}_{t \geq 0},
\end{align*}
see for example Proposition 4.20 in \cite{Resnick}. The F-extremal process is defined by its finite dimensional distributions
\begin{align*}
P(A(t_i) \leq x_i, 1 \leq i \leq d)=F^{t_1}(\wedge_{i=1}^d x_i) F^{t_2-t_1}(\wedge_{i=2}^d x_i) \cdots F^{t_d-t_{d-1}}(x_d)
\end{align*} 
for $d \geq 1$, times $0=:t_0<t_1< ... <t_d$ and all $x_i \in \mathbb{R}$. Especially it is $P(A(t) \leq x)=F(x)^t$. 
The next Theorem establishes the $J_1$-convergence of the {\it partial (joint) sum-max-process}.
\begin{theorem}\label{TheoremJ1SumMaxProzess} There exist $a_n,b_n>0$ and $d_n \in \mathbb{R}$ such that
\begin{equation}
(a_nS(n),b_n(M(n)-d_n)) \WKonvergenz (D,A), \label{AnnahmefddKonvergenzsatz} 
\end{equation}
if and only if 
\begin{align}
\{\left(a(c)S(ct),b(c)(M(ct)-d(c))\right)\}_{t >0} \xrightarrow[c \rightarrow \infty]{J_1} \{(D(t),A(t))\}_{t>0}, \label{fddKonvegenz}
\end{align}
where $a(c)=a_{[c]}, b(c)=b_{[c]}$ and $d(c)=d_{[c]}$. 
The limit process $\left\{(D(t),A(t))\right\}_{t > 0}$ is uniquely defined by the C-L transforms of its finite-dimensional distributions, which are given by
\begin{align}
&\mathcal{L}(P_{(D(t_j),A(t_j))_{j=1,...,m})})(s,y)=\prod_{j=1}^m \mathcal{L}(P_{(D,A)})^{(t_j-t_{j-1})}\Big(\Sigma_{k=j}^m s_k,\wedge_{k=j}^m y_k \Big),
\end{align}
with $s:=(s_1,...,s_m)\in \Rplus^m$, $y:=(y_1,...,y_m)\in \rr^m$  and $0=:t_0<t_1<...<t_m \in \raum$.
\end{theorem}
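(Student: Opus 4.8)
The plan is to prove the two implications separately, with the forward direction (from the single-time convergence \eqref{AnnahmefddKonvergenzsatz} to the functional $J_1$-convergence \eqref{fddKonvegenz}) carrying essentially all of the work. The reverse direction is immediate: since the limit $\{(D(t),A(t))\}_{t>0}$ is built from a $\beta$-stable subordinator and an extremal process, it has no fixed times of discontinuity, so every fixed $t>0$ is almost surely a continuity point of the paths and the evaluation map $x\mapsto x(1)$ is a.s.\ continuous for the $J_1$ topology. Evaluating \eqref{fddKonvegenz} at $t=1$ along the integer subsequence $c=n$ then yields $(a_nS(n),b_n(M(n)-d_n))\WKonvergenz(D(1),A(1))$, and the case $m=1$, $t_1=1$ of the stated formula gives $(D(1),A(1))\eqd(D,A)$, which is \eqref{AnnahmefddKonvergenzsatz}.

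For the forward direction I would first establish convergence of the finite-dimensional distributions with the claimed C-L transforms, and then upgrade to $J_1$-convergence. Fix $0=t_0<t_1<\dots<t_m$ and decompose both processes along the blocks $B_j=\{i:\lfloor ct_{j-1}\rfloor<i\le\lfloor ct_j\rfloor\}$. Because the $(W_i,J_i)$ are i.i.d., the block pairs
\[ \Big(a(c)\sum_{i\in B_j}W_i,\; b(c)\big(\bigvee_{i\in B_j}J_i-d(c)\big)\Big), \qquad j=1,\dots,m, \]
are independent, and the $j$-th pair has the law of the first $\approx c(t_j-t_{j-1})$ terms. I would then rescale the $j$-th block from its own normalization to the one calibrated at $c$: since $a(\cdot)$ is regularly varying of index $-1/\beta$ and $(b(\cdot),d(\cdot))$ obey the convergence-to-types relations of the relevant extreme-value type, the affine maps relating the two normalizations converge to fixed affine maps. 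Applying \eqref{AnnahmefddKonvergenzsatz} to each block together with the continuous-mapping theorem for these converging affine maps, and invoking the sum-max-stability scaling of the C-L exponent $\psi$ from \cite{HeesScheffler1}, each block pair converges to a limit $(\hat D_j,\hat A_j)$ with C-L transform $\mathcal L(P_{(D,A)})^{t_j-t_{j-1}}=\exp(-(t_j-t_{j-1})\psi)$.

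With the independent block limits in hand, the finite-dimensional limit follows from the additive/maximal structure $D(t_\ell)=\sum_{j\le\ell}\hat D_j$ and $A(t_\ell)=\bigvee_{j\le\ell}\hat A_j$. Computing the joint C-L transform and interchanging the order of summation (for the sum-coordinates) and of the indicator products (for the max-coordinates) reorganizes the exponent of $\hat D_j$ into $\sum_{k\ge j}s_k$ and the level for $\hat A_j$ into $\bigwedge_{k\ge j}y_k$; independence of the blocks then factorizes the expectation into exactly $\prod_{j=1}^m\mathcal L(P_{(D,A)})^{(t_j-t_{j-1})}\big(\sum_{k=j}^m s_k,\bigwedge_{k=j}^m y_k\big)$, the asserted formula. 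Since the C-L transform is a determining class for measures on $\Rplus\times\rr$, these finite-dimensional C-L transforms characterize the limit process uniquely.

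The main obstacle is the upgrade from finite-dimensional convergence to genuine $J_1$-convergence of the $\rr^2$-valued process, where the usual pitfall is that vector $J_1$-convergence fails when jumps of the two coordinates occur at asymptotically close but distinct times. The structural feature that rescues us is that both coordinate processes are nondecreasing and, crucially, are driven by the single index sequence $i$: a jump of $S(c\,\cdot)$ and a jump of $M(c\,\cdot)$ that survive in the limit and coincide in time necessarily originate from the same pair $(W_i,J_i)$ and hence sit at the common prelimit time $i/c$, while contributions from distinct indices separate in time in the limit. Thus no asymptotic collision of distinct-index jumps occurs, and a single time-change handles both coordinates simultaneously. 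I would make this rigorous either by a direct time-change construction exploiting the monotonicity of the coordinates together with the already-established marginal $J_1$-tightness (Theorem~7.1 and Corollary~7.1 of \cite{CTRWinfinite}, Proposition~4.20 of \cite{Resnick}), or, more robustly, by proving convergence of the bivariate point process $\sum_i\varepsilon_{(i/c,\,a(c)W_i,\,b(c)(J_i-d(c)))}$ to a Poisson random measure governed by the L\'evy-exponent measure $\Phi$ and then applying a continuous summation-and-supremum map, which is continuous precisely at configurations with distinct time-coordinates. It is exactly this common-index structure, rather than any independence of $W_i$ and $J_i$, that accommodates arbitrary dependence; verifying continuity of the map at the relevant limit configurations is the delicate step.
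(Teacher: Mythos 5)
Your proposal is correct in outline, but it takes a genuinely different route from the paper, for the simple reason that the paper offers no argument at all: its entire proof is the remark that the claim follows as in Chow and Teugels \cite{chow1978sum}, who treated the case $W_i=J_i$ (sum and maximum of the \emph{same} sequence). What you supply instead is a self-contained two-stage argument: first, finite-dimensional convergence via the block decomposition over $B_j=\{i:\lfloor ct_{j-1}\rfloor<i\le\lfloor ct_j\rfloor\}$, with the product formula for the C-L transforms falling out of the reindexing $\sum_j s_j\sum_{k\le j}\hat D_k=\sum_k\hat D_k\sum_{j\ge k}s_j$ and $\{\bigvee_{k\le j}\hat A_k\le y_j\ \forall j\}=\{\hat A_k\le\bigwedge_{j\ge k}y_j\ \forall k\}$, which indeed reproduces the stated expression; second, the upgrade to joint $J_1$-convergence, where you correctly identify the crux, namely that bivariate $J_1$-convergence would fail under asymptotic collision of jumps from distinct indices, and that the common-index structure of $(S,M)$ (not any independence of $W_i$ and $J_i$) rules this out. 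Your point-process route, via convergence of $\sum_i\varepsilon_{(i/c,\,a(c)W_i,\,b(c)(J_i-d(c)))}$ to a Poisson random measure with intensity $ds\times\Phi$, buys more than the paper's citation does: it produces the L\'evy-exponent measure $\Phi$ directly, which is exactly the object the rest of the paper runs on, and it accommodates the arbitrary dependence between $W_i$ and $J_i$ transparently. The one place where your sketch still owes real work is the continuity of the summation-and-supremum functional: for $0<\beta<1$ the points accumulate at small jump sizes, so the summation part is not continuous on all of the point-measure space; you need the standard truncation at jump size $\varepsilon$, continuity of the truncated functional at configurations with distinct time coordinates (a.s.\ for the limiting PRM), and a uniform negligibility bound as $\varepsilon\downarrow0$. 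You acknowledge this is the delicate step, but it is the step that must actually be written out for the proof to close.
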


\begin{proof}
The proof is similar to the proof in the case $W_i=J_i$ which can be found in \cite{chow1978sum}.
\end{proof}

\section{Limit Theorem for the CTRM}\label{limitheorem}

In this section we prove a limit theorem for the long time behavior of CTRM and OCTRM processes. It is based on the following more general theorem for triangular arrays. The method of proof relies heavily on techniques  developed in \cite{StrakaDis}. For any $c>0$ let $(W_i^{(c)},J_i^{(c)})_{i \in \mathbb{N}}$ be a sequence of iid $\rr_+\times\rr$-valued random vectors. Define the processes $S^{(c)}$ resp. $M^{(c)}$ by their paths $S^{(c)}(t):=S^{(c)}(\lfloor t \rfloor)$ and $M^{(c)}(t):=M^{(c)}(\lfloor t \rfloor)$ where
\begin{align*}
S^{(c)}(n):=\sum_{i=1}^{n} W_i^{(c)} \text{ resp. } M^{(c)}(n):=\bigvee_{i=1}^{n} J_i^{(c)}.
\end{align*} 
Furthermore we define the renewal process $N^{(c)}$ by \[N^{(c)}(t):=\max\left\{n \geq 0: S^{(c)}(n) \leq t \right\}.\] 
Then are
\[ V^{(c)}(t)=M^{(c)}(N^{(c)}(t))\quad\text{\and}\quad U^{(c)}(t)=M^{(c)}(N^{(c)}(t)+1) \]
the corresponding CTRM and OCTRM processes. 

In the following, $\mathbb{D}_{u}(\rr_+)$ denotes the subset of the Skorokhod space $\mathbb{D}(\rr_+)=D([0,\infty),\rr_+)$ of all functions that are unbounded from above and $\mathbb{D}_{\uparrow}(\rr_+)$ and $\mathbb{D}_{\uparrow\uparrow}(\rr_+)$ the subset of all functions $\alpha\in \mathbb{D}(\rr_+)$, with $\alpha$ non-decreasing and strictly  increasing. Furthermore set $\mathbb{D}_{u,\uparrow}(\rr_+):=\mathbb{D}_{u}(\rr_+) \cap \mathbb{D}_{\uparrow}(\rr_+)$ and $\mathbb{D}_{u,\uparrow\uparrow}(\rr_+):=\mathbb{D}_{u}(\rr_+)\cap \mathbb{D}_{\uparrow\uparrow}(\rr_+)$.\\
We denote by $\alpha^{-}$ the left continuous version of a c\`adl\`ag path and by $\alpha^{+}$ the right continuous version of a c\`agl\`ad path $\alpha$. For the purpose of better readability we sometimes also write $\alpha(t-)$ and $\alpha(t+)$ instead of $\alpha^{-}(t)$ and $\alpha^{+}(t)$. Moreover we define the left continuous and right continuous inverse of a path $\alpha \in \mathbb{D}_{u}(\mathbb{R})$  by
\begin{align*}
\alpha^{-1}(t):=\inf\left\{s:\alpha(s) > t\right\} \hspace{1cm} \text{ and } \hspace{1cm} \alpha^{\leftarrow}(t):=\inf\left\{s:\alpha(s) \geq t\right\}.
\end{align*}
An unbounded, increasing function $\mu: \Rplus \rightarrow \mathbb{N}_0$ with $\mu(0)=0$ and jumps of height one is called a discrete time change.

\begin{theorem}\label{allgemeinerGrenzwertsatz}
Let $\mu$ be a discrete time change and $(W_i^{(c)},J_i^{(c)})_{i \in \mathbb{N}}$ for all $c>0$ a sequence of $\Rplus \times \mathbb{R}$-valued random vectors, such that
\begin{align}
\left\{\left(S^{(c)}(t), M^{(c)}(t)\right)\right\}_{t >0} \circ \mu \JKonvergenzc \left\{\left(D(t),A(t)\right)\right\}_{t > 0}, \label{AnnahmealgGrenzwertsatz}
\end{align}
where we assume that the paths of $\left\{D(t)\right\}_{t>0}$ are a.s. strictly monotone increasing. Then
\begin{align}
&\left\{V^{(c)}(t)\right\}_{t > 0} \JKonvergenzc \left\{A(E(t)-)^{+}\right\}_{t>0}  \\
&\hspace{5mm}\text{and }  \nonumber \\
&\left\{U^{(c)}(t)\right\}_{t >0} \JKonvergenzc \left\{A(E(t))\right\}_{t>0}, 
\end{align}
where $E(t):=\inf\left\{s: D(s) > t \right\}$ is the inverse stable subordinator.
\end{theorem}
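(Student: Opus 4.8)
The plan is to exhibit both $V^{(c)}$ and $U^{(c)}$ as images of the converging pair $(S^{(c)},M^{(c)})\circ\mu$ under explicit path functionals — a hitting-time (inverse) operation followed by a composition — and then to invoke the continuous mapping theorem, the a.s.\ continuity of these functionals at the limit point $(D,A)$ being guaranteed by the hypothesis that $D$ has strictly increasing paths.

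First I would encode the renewal structure through the inverse. Using the defining relation $\{N^{(c)}(t)\ge n\}=\{S^{(c)}(n)\le t\}$ together with the step-function convention $S^{(c)}(s)=S^{(c)}(\lfloor s\rfloor)$, one checks that the left-continuous inverse satisfies $(S^{(c)})^{-1}(t)=N^{(c)}(t)+1$. This yields the representations $U^{(c)}(t)=M^{(c)}\big((S^{(c)})^{-1}(t)\big)$, which picks up the overshooting observation, and $V^{(c)}(t)=M^{(c)}\big((S^{(c)})^{-1}(t)-\big)^{+}$, which excludes it; composing with the discrete time change $\mu$ recasts both as functionals of the converging pair in \eqref{AnnahmealgGrenzwertsatz}. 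Since the asserted limits $A(E(t))$ and $A(E(t)-)^{+}$ have exactly this form with $E=D^{-1}$, it suffices to push the inverse and the composition through the limit.

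Next I would establish convergence of the inverse. Because $D$ is a $\beta$-stable subordinator, its paths lie a.s.\ in $\mathbb{D}_{u,\uparrow\uparrow}(\rr_+)$, and on this strictly-increasing unbounded set the inverse map $\alpha\mapsto\alpha^{-1}$, with $\alpha^{-1}(t)=\inf\{s:\alpha(s)>t\}$, is continuous; the continuous mapping theorem then upgrades \eqref{AnnahmealgGrenzwertsatz} to the joint convergence $\big(M^{(c)}\!\circ\mu,\,(S^{(c)}\!\circ\mu)^{-1}\big)\JKonvergenz(A,E)$. The remaining, decisive step is composition continuity: I would show, following the continuous-mapping techniques of \cite{StrakaDis}, that the map $(v,\tau)\mapsto v\circ\tau$ carries this joint limit to $A\circ E$.

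The hard part is the composition at the common jump times of $D$ and $A$. Because the waiting times and observations are coupled, $D$ and $A$ may jump simultaneously — this is precisely the regime excluded in \cite{Meerschaert2007a,panch04} — and at such a time $s$ the inverse $E$ is flat on $[D(s-),D(s))$ with $E\equiv s$ there, while $A$ jumps at $s$; generic composition theorems fail at coincident jumps, so no off-the-shelf continuity statement applies. The resolution is to exploit this matched structure directly: on a plateau of $E$ the overshoot model evaluates $A$ at the right endpoint, $A(E(t))=A(s)$, including the jump, whereas the undershoot model takes the left limit, $A(E(t)-)=A(s-)$, excluding it, and the modification $(\cdot)^{+}$ repairs the resulting càglàd path in $t$ into the asserted càdlàg limit. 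Making this rigorous — tracking jump times and plateaus through the Skorokhod metric and discarding the null set on which $D$ fails to be strictly increasing or on which a shared jump of $D$ and $A$ lands on an interior plateau point — is the technical core, carried out along the lines of \cite{StrakaDis}.
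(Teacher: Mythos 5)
Your proposal is correct and follows essentially the same route as the paper: both represent $V^{(c)}$ and $U^{(c)}$ as images of $(S^{(c)},M^{(c)})\circ\mu$ under the inverse-plus-composition functionals (the paper's $\Xi(\beta,\sigma)=(\beta^{-}\circ\sigma^{\leftarrow})^{+}$ and $\Upsilon(\beta,\sigma)=\beta\circ\sigma^{-1}$), restrict to the set of unbounded, strictly increasing first components where these maps are continuous by the results in \cite{StrakaDis}, and conclude with the continuous mapping theorem. Your identification of the coincident-jump issue and of the plateau behaviour distinguishing $A(E(t))$ from $A(E(t)-)^{+}$ matches the technical core that the paper likewise delegates to \cite{StrakaDis}.
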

\begin{proof}
We first look at the case $\mu(t):=\lfloor t \rfloor$. We define $E^{(c)}(t):=(S^{(c)}(t))^{-1}$. From the proof of Proposition 2.4.2 in \cite{StrakaDis} it follows that
\begin{align*}
E^{(c)}(t)=N^{(c)}(t)+1.
\end{align*}
Define the mappings $\Xi : \mathbb{D}_{u,\uparrow}(\mathbb{R}_{+}) \times \mathbb{D}(\mathbb{R}) \rightarrow \mathbb{D}(\mathbb{R})$  and $\Upsilon: \mathbb{D}_{u,\uparrow}(\mathbb{R}_{+}) \times \mathbb{D}(\mathbb{R}) \rightarrow \mathbb{D}(\mathbb{R})$  by
\begin{align}
\Xi(\beta,\sigma):=(\beta^{-} \circ \sigma^{\leftarrow})^{+} \quad\text{ and }\quad \Upsilon(\beta,\sigma):=\beta \circ \sigma^{-1}. \label{Pfadabbildungen}
\end{align}
It follows that
\begin{align*}
\Upsilon \circ (S^{(c)},M^{(c)})(t)=\bigvee_{i=1}^{\lfloor E^{(c)}(t) \rfloor}{J_i^{(c)}}=\bigvee_{i=1}^{N^{(c)}(t) +1} J_i^{(c)}=U^{(c)}(t).
\end{align*}
Furthermore it follows  as in the proof of Proposition 2.4.2 in \cite{StrakaDis} that
\begin{align*}
\Xi \circ (S^{(c)},M^{(c)}) (t)=V^{(c)}(t).
\end{align*}
If we now assume that $\mu$ is an arbitrary discrete time change, there exists a time change $\lambda\in \mathbb{D}(\Rplus)$ such that $\mu(t)=\lfloor \lambda(t) \rfloor$. Hence it follows with Lemma 2.4.1 in \cite{StrakaDis}, that
\begin{align*}
&\Upsilon \circ \left\{\left(S^{(c)}(t), M^{(c)}(t)\right)\right\}_{t >0} \circ \mu = U^{(c)}(t) \text{ and }\\
&\Xi \circ \left\{\left(S^{(c)}(t), M^{(c)}(t)\right)\right\}_{t >0} \circ \mu = V^{(c)}(t).
\end{align*}
Let $P_c$ be the distribution of $\left\{(S^{(c)}(t),M^{(c)}(t))\right\}_{t>0}\circ \mu$ and $P$ the distribution of $\left\{(D(t),A(t))\right\}_{t>0}$. Assumption \eqref{AnnahmealgGrenzwertsatz} is equivalent to $P_{c} \wKonvergenz P$ in $(\mathbb{D}(\Rplus \times \mathbb{R}),J_1)$ as $c \rightarrow \infty$. Let $\mathbb{D}_{u,\uparrow}:=\mathbb{D}_{u,\uparrow}(\mathbb{R}_{+}) \times \mathbb{D}(\mathbb{R})$. In Lemma 2.2.1 in \cite{StrakaDis} it is shown that $\mathbb{D}_{u,\uparrow}$ is Borel-measurable. Furthermore $P_c(\mathbb{D}_{u,\uparrow})=P(\mathbb{D}_{u,\uparrow})=1$. We denote by $P_{c |\mathbb{D}_{u,\uparrow}}$ and $P_{|\mathbb{D}_{u,\uparrow}}$ the restrictions $P_c$ and $P$ on $\mathbb{D}_{u,\uparrow}$, respectively. Due to Corollary 3.3.2 in \cite{EthierKurtz} we have \[P_{c |\mathbb{D}_{u,\uparrow}} \xrightarrow[c \rightarrow \infty]{w} P_{|\mathbb{D}_{u,\uparrow}},\] where $\mathbb{D}_{u,\uparrow}$ is equipped with the relative topology. In Proposition 2.3.8 in \cite{StrakaDis} it is shown further that $\mathbb{D}_{u,\uparrow \uparrow}$ belongs to the set of continuities of $\Xi$ and $\Upsilon$ and in Lemma 2.3.5 in \cite{StrakaDis} that these two mappings are measurable. Let $\textnormal{Disc}(\Xi)$ and $\textnormal{Disc}(\Upsilon)$ denote the set of discontinuities of $\Xi$ and $\Upsilon$, respectively. Due to the assumption that the subordinator $\left\{D(t)\right\}_{t>0}$ has a.s. strictly increasing paths we have $P(\mathbb{D}_{u,\uparrow \uparrow})=1$. Consequently we get $P(\textnormal{Disc}(\Xi))=P(\textnormal{Disc}(\Upsilon))=0$. Using the Continuous Mapping Theorem it then follows that
\begin{align*}
\Xi( P_{c |\Dpu}) \wKonvergenzcinfty \Xi (P_{|\Dpu}) \hspace{1cm} \text{ and } \hspace{1cm} \Upsilon(P_{c|\Dpu}) \wKonvergenzcinfty \Upsilon( P_{|\Dpu})
\end{align*}
and this is equivalent to the assertion. This concludes the proof.
\end{proof}

Using Theorem \ref{allgemeinerGrenzwertsatz} above, we are now able to prove the following limit theorem for the long time behavior of the CTRM and the OCTRM. If the waiting times between the jumps have a finite mean, it is a classical result of renewal theory that the renewal process is asymptotically equivalent to a multiple of the time variable, i.e.
\begin{align*}
N(t) \sim \frac{t}{E(W)} \text{ as } t \rightarrow \infty \text{ a.s.}
\end{align*}
 As a consequence, the appropriate scaled CTRM (resp. OCTRM) behaves asymptotically like a classical extremal process. The interesting case is if the waiting times between the jumps have an infinite mean. This is the case if we assume that the waiting times $W_i\overset{d}{=}W$ are in the domain of attraction of a $\beta$-stable distribution for some $0<\beta<1$. 
 
\begin{theorem}\label{Grenzwertsatz}\ \\
Let $(W_i,J_i)_{i \in \mathbb{N}},(W,J)$ be iid and $\Rplus \times \mathbb{R}$-valued random vectors. We assume that there exist $a_n,b_n>0$ and $d_n \in \mathbb{R}$ such that
\begin{align}\label{AnnahmeGrenzwertsatz}
(a_n S(n), b_n(M(n)-d_n)) \WKonvergenz (D,A),
\end{align}
where $D$ is strictly $\beta$-stable with $0<\beta<1$ and $A$ has an extreme value distribution. Then there exist functions $\tilde{b}(c)$ and $\tilde{d}(c)$ such that
\begin{align}
&\left\{\tilde{b}(c)\big(V(ct)-\tilde{d}(c)\big)\right\}_{t>0} \JKonvergenzc \left\{A(E(t)-)^{+}\right\}_{t>0} \label{CTRMGrenzwert}\\
&\hspace{5mm}\text {and } \nonumber \\ 
&\left\{\tilde{b}(c)\big(U(ct)-\tilde{d}(c)\big)\right\}_{t>0} \JKonvergenzc \big\{A(E(t))\big\}_{t>0}.\label{OCTRMGrenzwert}
\end{align}
Here $\left\{A(t)\right\}_{t>0}$ is a F-extremal process with $P(A(t) \leq x)=F(x)^t$, where $F$ is the distribution function of $A$. Furthermore $\left\{E(t)\right\}_{t>0}$ is the (left continuous) inverse of the stable subordinator $\left\{D(t)\right\}_{t>0}$.
\end{theorem}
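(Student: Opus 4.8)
The plan is to deduce the statement from the triangular-array limit theorem, Theorem \ref{allgemeinerGrenzwertsatz}, by choosing an array and a discrete time change so that the array's CTRM and OCTRM coincide exactly with the centred, rescaled versions of $V$ and $U$ appearing in \eqref{CTRMGrenzwert} and \eqref{OCTRMGrenzwert}. First I would fix the time scaling. Since $W$ lies in the domain of attraction of a strictly $\beta$-stable law, the norming sequence $a_n$ is regularly varying of index $-1/\beta$; let $\gamma(c)$ denote an asymptotic inverse, i.e.\ a function regularly varying of index $\beta$ (so $\gamma(c)\asymp c^{\beta}$) with $c\,a(\gamma(c))\to 1$ as $c\to\infty$. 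I then set
\begin{align*}
W_i^{(c)} := \frac{W_i}{c}, \qquad J_i^{(c)} := \tilde b(c)\bigl(J_i - \tilde d(c)\bigr), \qquad \mu(t) := \lfloor \gamma(c) t\rfloor,
\end{align*}
with $\tilde b(c) := b(\gamma(c))$ and $\tilde d(c) := d(\gamma(c))$. The heuristic is that by physical time $ct$ the renewal process has performed of order $\gamma(c)$ jumps, so the maxima must be normed at the scale $\gamma(c)$, not at scale $c$.

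Next I would check the hypothesis \eqref{AnnahmealgGrenzwertsatz} for this array. Here $S^{(c)}(n)=S(n)/c$ and $M^{(c)}(n)=\tilde b(c)(M(n)-\tilde d(c))$, so that, using $S(s)=S(\lfloor s\rfloor)$,
\begin{align*}
\bigl(S^{(c)}(\mu(t)),\, M^{(c)}(\mu(t))\bigr) = \Bigl(\tfrac{1}{c\,a(\gamma(c))}\,a(\gamma(c))S(\gamma(c)t),\ b(\gamma(c))\bigl(M(\gamma(c)t)-d(\gamma(c))\bigr)\Bigr).
\end{align*}
Applying Theorem \ref{TheoremJ1SumMaxProzess} along the parameter $\gamma(c)\to\infty$ gives $\{(a(\gamma(c))S(\gamma(c)t),\,b(\gamma(c))(M(\gamma(c)t)-d(\gamma(c))))\} \JKonvergenzc \{(D(t),A(t))\}$; since multiplying the first coordinate by the deterministic scalar $1/(c\,a(\gamma(c)))\to 1$ preserves $J_1$-convergence (a converging-together argument), I obtain $\{(S^{(c)}(t),M^{(c)}(t))\}\circ\mu \JKonvergenzc \{(D(t),A(t))\}$. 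The required a.s.\ strict monotonicity of the paths of $\{D(t)\}$ holds because $D$ is a strictly $\beta$-stable subordinator with $0<\beta<1$, whose L\'evy measure is infinite and whose paths are therefore a.s.\ strictly increasing.

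It remains to identify the array's processes with the scaled originals. A short computation with the right-continuous inverse, exactly as in the proof of Theorem \ref{allgemeinerGrenzwertsatz}, gives $[\,S^{(c)}(\mu(\cdot))\,]^{-1}(t) = (N(ct)+1)/\gamma(c)$, since $S^{(c)}(\mu(s))>t \iff S(\lfloor\gamma(c)s\rfloor)>ct$, whence
\begin{align*}
U^{(c)}(t) = M^{(c)}\bigl(N(ct)+1\bigr) = \tilde b(c)\bigl(U(ct)-\tilde d(c)\bigr), \qquad V^{(c)}(t) = \tilde b(c)\bigl(V(ct)-\tilde d(c)\bigr),
\end{align*}
the second identity following analogously via the mapping $\Xi$. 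Theorem \ref{allgemeinerGrenzwertsatz} then yields $\{V^{(c)}(t)\}\JKonvergenzc\{A(E(t)-)^{+}\}$ and $\{U^{(c)}(t)\}\JKonvergenzc\{A(E(t))\}$, which are precisely \eqref{CTRMGrenzwert} and \eqref{OCTRMGrenzwert} after substituting the two identities. The hard part will be the matching of time scales: the waiting-time norming lets $S^{(c)}(\mu(t))$ converge to $D$ only if $\mu$ runs at rate $\gamma(c)\asymp c^{\beta}$, while the renewal process must simultaneously be read at physical time $ct$; reconciling these through the regular-variation relation $c\,a(\gamma(c))\to 1$, and verifying that it is consistent with the max-norming $\tilde b(c)=b(\gamma(c))$, $\tilde d(c)=d(\gamma(c))$, is the delicate point.
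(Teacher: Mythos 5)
Your proposal is correct and follows essentially the same route as the paper: your $\gamma(c)$ is the paper's $\tilde a(c)$ (the asymptotic inverse of $1/a$, regularly varying of index $\beta$), the converging-together step for the scalar $1/(c\,a(\gamma(c)))\to 1$ is the paper's application of the generalized continuous mapping theorem, and the reduction to Theorem \ref{allgemeinerGrenzwertsatz} with $W_i^{(c)}=W_i/c$, $J_i^{(c)}=b(\gamma(c))(J_i-d(\gamma(c)))$ and $\mu(t)=\lfloor\gamma(c)t\rfloor$ is exactly the paper's argument. The identification $N^{(c)}(t)=N(ct)$ and the resulting equalities $V^{(c)}(t)=\tilde b(c)(V(ct)-\tilde d(c))$, $U^{(c)}(t)=\tilde b(c)(U(ct)-\tilde d(c))$ match the paper's as well.
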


\begin{proof}
In view of Theorem \ref{TheoremJ1SumMaxProzess} we know that there exists a function $a(c)$ which is regularly varying with index $-1/\beta$  and functions $b(c)$ and $d(c)$ such that
\begin{eqnarray*}
\left\{(a(c)S(ct),b(c)(M(ct)-d(c))\right\}_{t>0} & \JKonvergenzc \left\{(D(t),A(t))\right\}_{t>0}. 
\end{eqnarray*}
Since $a$ is regularly varying with index $-1/\beta$,  $1/a$ is regularly varying with index $1/\beta$. Hence there exists a function $\tilde{a}(c)$ regularly varying with index  $\beta$ such that $1/a(\tilde{a}(c)) \sim c$ as $c \rightarrow \infty$. (cf. p.738 in \cite{Becker-Kern2004} or Property 1.5.5. in \cite{Seneta1976}).
Let $g_c: \mathbb{D}_{\uparrow,u}(\Rplus) \times \mathbb{D}(\mathbb{R}) \rightarrow \mathbb{D}_{\uparrow,u}(\Rplus) \times \mathbb{D}(\mathbb{R})$ 
be defined as $g_c(x,y)=((ca(\tilde{a}(c))^{-1}x,y)$. Then $g_c(x,y) \rightarrow (x,y)$ as $c\rightarrow \infty$ in $(\mathbb{D}_{\uparrow,u}(\Rplus) \times \mathbb{D}(\mathbb{R}),J_1)$. It follows with the generalized continuous-mapping theorem (see for example Theorem 3.4.4. in \cite{Whitt}) that
\begin{align*}
\{\big(c^{-1}S(\tilde{a}(c)t), b(\tilde{a}(c))(M(\tilde{a}(c)t)-d(\tilde{a}(c)))\big)\}_{t>0} \JKonvergenzc \left\{(D(t),A(t))\right\}_{t>0}.
\end{align*}
Using Theorem \ref{allgemeinerGrenzwertsatz}  with $W_i^{(c)}:=c^{-1}W_i$ and $J_i^{(c)}:=b(\tilde{a}(c))(J_i-d(\tilde{a}(c)))$, $\mu(t):=\lfloor \tilde{a}(c) t \rfloor$ and since  $N^{(c)}(t)=N(ct)$ if follows that
\begin{align}
\big\{b(\tilde{a}(c)) \bigvee_{i=1}^{N(ct)} (J_i-d(\tilde{a}(c)))\big\}_{t >0} & \JKonvergenzc \left\{A(E(t)-)^{+}\right\}_{t >0}. \label{GrenzwertCTRW} 
\end{align}
The proof for the OCTRM is similar.
\end{proof}

\section{Law of the CTRM and OCTRM scaling limit}
In this section we derive the distribution functions of the limit processes obtained in Theorem \ref{Grenzwertsatz}. The next theorem provides two ways to calculate the distribution function of the CTRM and the OCTRM long time limit at a fixed point $t>0$ of time. On one hand we get a closed formula for calculating the distribution function based on the joint distribution of $(D(t),A(t))$ and the L\'evy measure $\Phi$ of $(D,A)$. However, the joint distribution of $(D(t),A(t))$ is only in a few cases explicitly given.
On the other hand we obtain a formula for a Laplace transform in time $t$ of the CDFs which can be used to calculate the distribution functions by inversion of the Laplace transform.  This method will be used in the examples presented in section 5.\\
In the following let $x_F$ be the right endpoint of the distribution function $F$ of the extreme value distributed random variable $A$, i.e.
\begin{align*}
x_F:=\sup \left\{x \in\rr : F_A(x)<1 \right\}.
\end{align*}

\begin{theorem}\label{Grenzwertverteilungsfunktion}
(a) For a fixed $t>0$ the distribution function of the CTRM limit in \eqref{CTRMGrenzwert} in Theorem \ref{Grenzwertsatz} is given by
\begin{align} \label{VerteilungsfunktionCTRM}
G_t(x):&=P\left(A(E(t)-)^{+} \leq x \right)  \\
&= \int_{s=0}^{\infty} \int_{u\in\mathbb{R}} \int_{\tau=0}^t \ind{\left[-\infty,x\right]}(u) \Phi_D(t-\tau,\infty) P_{(D(s),A(s))}(d\tau,du) ds. \nonumber
\end{align}
Furthermore for an arbitrary $\xi>0$ and for all $x_0<x<x_F$ we have the following Laplace transform:
\begin{align}
\intnu e^{-\xi t} P\left(A(E(t)-)^{+} \leq x \right) dt &= \frac{1}{\xi}\frac{\Psi_D(\xi)}{\Psi(\xi,x)}. \label{LTderVF}
\end{align}
(b) For a fixed $t>0$ the distribution function of the OCTRM limit in \eqref{OCTRMGrenzwert} in Theorem \ref{Grenzwertsatz} is given by
\begin{align}\label{VerteilungsfunktionOCTRM}
F_t(x):&=P\left(A(E(t)) \leq x \right) \\ 
&=\int_{s=0}^{\infty} \int_{u\in\mathbb{R}} \int_{\tau=0}^t \ind{[-\infty,x]}(u)\Phi ((t-\tau,\infty),[-\infty,x]) P_{(D(s),A(s))}(d\tau,du)ds. \nonumber
\end{align}
Furthermore for an arbitrary $\xi>0$ and for all $x_0<x<x_F$ we have the following Laplace transform:
\begin{eqnarray}
\intnu e^{-\xi t} P\left(A(E(t)) \leq x \right) dt = \frac{1}{\xi} \frac{\Psi(\xi, x)+\log F_A(x)}{\Psi(\xi, x)}. \label{LTderVFOCTRW}
\end{eqnarray}
\end{theorem}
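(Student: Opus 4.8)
The plan is to realize the coupled limit process $(D,A)$ as a Poisson point process, read off both CDFs as the expected contribution of the single ``crossing jump'' that produces the first passage $E(t)$, and then obtain the Laplace transforms by Fubini together with two elementary $\Phi$-integrals. First I would record that the $m=1$ case of Theorem~\ref{TheoremJ1SumMaxProzess} gives $\Exp[e^{-\sigma D(s)}\ind{\{A(s)\le y\}}]=\mathcal L(P_{(D(s),A(s))})(\sigma,y)=e^{-s\Psi(\sigma,y)}$, and that this is exactly the exponential formula for the Poisson point process $\Pi=\{(s_i,w_i,j_i)\}$ on $\raum\times\raum\times[x_0,\infty)$ with intensity $ds\,\Phi(dw,dj)$ under the identification $D(s)=\sum_{s_i\le s}w_i$ and $A(s)=\bigvee_{s_i\le s}j_i$. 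Here $D$ is the strictly increasing pure-jump $\beta$-stable subordinator and $A$ the $F$-extremal process, coupled through $\Phi$.

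Since $D$ is a.s.\ strictly increasing and has no drift, $E(t)=\inf\{s:D(s)>t\}$ is a.s.\ the time $s^\ast$ of a single jump of $\Pi$ with $D(s^\ast-)\le t<D(s^\ast)$, whose $\raum$-mark $w^\ast$ satisfies $w^\ast>t-D(s^\ast-)$; pathwise $A(E(t)-)^{+}=A(s^\ast-)=\bigvee_{s_i<s^\ast}j_i$ while $A(E(t))=A(s^\ast-)\vee j^\ast$. I would then apply the compensation (Mecke) formula to the summand indicator ``$(s,w,j)$ is the crossing point''. Because $D(s-),A(s-)$ depend only on $\Pi|_{[0,s)}$ and are independent of the added mark $(w,j)$, integrating $\Phi$ over the crossing region and using $\int\!\int\ind{\{w>t-D(s-)\}}\Phi(dw,dj)=\Phi_D((t-D(s-),\infty))$ gives, for the CTRM,
\begin{align*}
G_t(x)=\intnu \Exp\big[\ind{\{D(s-)\le t\}}\,\ind{\{A(s-)\le x\}}\,\Phi_D((t-D(s-),\infty))\big]\,ds;
\end{align*}
for the OCTRM the additional constraint $j^\ast\le x$ replaces $\Phi_D((t-D(s-),\infty))$ by $\Phi((t-D(s-),\infty),[-\infty,x])$. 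Replacing the left limits by $(D(s),A(s))$ (they agree for Lebesgue-a.e.\ $s$) and expressing the expectation against $P_{(D(s),A(s))}(d\tau,du)$ then produces exactly \eqref{VerteilungsfunktionCTRM} and \eqref{VerteilungsfunktionOCTRM}.

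For the Laplace transforms I would multiply by $e^{-\xi t}$ and integrate; all integrands are nonnegative, so Tonelli permits integrating $t\in[\tau,\infty)$ first. For the CTRM the key one-line computation is
\begin{align*}
\intnu e^{-\xi r}\,\Phi_D((r,\infty))\,dr=\int_{0}^{\infty}\frac{1-e^{-\xi w}}{\xi}\,\Phi_D(dw)=\frac{\Psi_D(\xi)}{\xi},
\end{align*}
whence $\int_\tau^\infty e^{-\xi t}\Phi_D((t-\tau,\infty))\,dt=e^{-\xi\tau}\Psi_D(\xi)/\xi$; the remaining factor is $\intnu \Exp[e^{-\xi D(s)}\ind{\{A(s)\le x\}}]\,ds=\intnu e^{-s\Psi(\xi,x)}\,ds=1/\Psi(\xi,x)$, finite because $\Psi(\xi,x)\ge-\log F_A(x)>0$ for $x<x_F$, giving \eqref{LTderVF}. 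For the OCTRM the same scheme needs only $\int_0^\infty e^{-\xi r}\Phi((r,\infty),[-\infty,x])\,dr=\tfrac1\xi\int\!\int_{\{j\le x\}}(1-e^{-\xi w})\,\Phi(dw,dj)$, together with the splitting $\Psi(\xi,x)=\int\!\int_{\{j\le x\}}(1-e^{-\xi w})\,\Phi(dw,dj)+\Phi_A((x,\infty))$ and $\Phi_A((x,\infty))=-\log F_A(x)$, which turn the numerator into $\Psi(\xi,x)+\log F_A(x)$ and yield \eqref{LTderVFOCTRW}.

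The delicate point is the crossing-jump decomposition itself: justifying that $E(t)$ is a.s.\ a jump time with $D(E(t)-)\le t<D(E(t))$ (this uses the strict monotonicity and pure-jump nature of the stable subordinator), that $(D(s-),A(s-))$ is independent of the crossing mark so that the compensation formula applies cleanly, and that the left limits may be replaced by $(D(s),A(s))$ under $ds$. Once this identity is secured, the rest is bookkeeping: Tonelli, the two elementary $\Phi$-integrals above, and $\intnu e^{-s\Psi(\xi,x)}\,ds=1/\Psi(\xi,x)$.
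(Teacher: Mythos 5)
Your proposal is correct in substance but takes a genuinely different route from the paper for the central identification. The paper never touches the limit process pathwise: it computes the exact Laplace transform in $t$ of the pre-limit CTRM/OCTRM by a renewal decomposition over $\{N(t)=n\}$ (Lemma \ref{lemmaLaplaceCTRMundOCTRM}), passes to the limit using the continuity theorem for the C-L transform (Lemma \ref{KonvergenzCLTransformierte}), separately computes the Laplace transform of the candidate triple-integral formulas (Lemma \ref{LemmazuGtundFt}, whose computation coincides with your last two paragraphs, including the identity of Lemma \ref{lemmaCTRM1}), and then identifies the two for a.e.\ $t$ by uniqueness of the Laplace transform, upgrading to all $t$ via a right-continuity argument. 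You instead realize $(D,A)$ as a functional of a Poisson point process with intensity $ds\,\Phi(dw,dj)$ and derive \eqref{VerteilungsfunktionCTRM} and \eqref{VerteilungsfunktionOCTRM} directly at every fixed $t$ by the Mecke formula applied to the unique crossing jump; this gives the formulas a transparent Dynkin--Lamperti (undershoot) meaning, avoids Lemmas \ref{lemmaLaplaceCTRMundOCTRM} and \ref{KonvergenzCLTransformierte} entirely, and dispenses with the a.e.-$t$/right-continuity patch. The price is two steps you assert rather than prove: (i) that the PPP functional has the finite-dimensional distributions prescribed in Theorem \ref{TheoremJ1SumMaxProzess} (you only check $m=1$; the product formula for general $m$ follows from independence of the PPP on disjoint time strips, but it must be said, since the law of $A(E(t)-)^{+}$ is a path functional and is only determined once the full law of the process is matched); and (ii) that for fixed $t$ the crossing jump exists and is unique a.s.\ and that $A(E(t)-)^{+}=A(E(t)-)$ a.s., both of which rest on the polarity of points for the driftless $\beta$-stable subordinator --- exactly the fact (Kesten) that the paper invokes to show $\int_0^\infty\int_0^t\Phi_D(t-\tau,\infty)\,P_{D(s)}(d\tau)\,ds=1$, i.e.\ that your sum over crossing points has total mass one. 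With those two points filled in, your argument is a complete and arguably more illuminating proof; your remaining computations (Tonelli, $\intnu e^{-\xi r}\Phi_D((r,\infty))\,dr=\Psi_D(\xi)/\xi$, the splitting of $\Psi(\xi,x)$ into the $\{j\le x\}$ part plus $\Phi_A((x,\infty))=-\log F_A(x)$, and $\intnu e^{-s\Psi(\xi,x)}\,ds=1/\Psi(\xi,x)$ using $\Psi(\xi,x)\ge-\log F_A(x)>0$) agree exactly with the paper's.
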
 

The proof is based on a series of Lemmas presented in the following.

\begin{lemma} \label{lemmaCTRM1} 
For an arbitrary $x>x_0$ and $\xi>0$ it is
\begin{align}
&\intnu e^{-\xi t} \Phi((t,\infty),[-\infty,x]) dt = \frac{1}{\xi} \left(\Psi(\xi,x)+\log F_{A}(x)\right),
\end{align}
where $\Psi$ is the C-L exponent of $(D,A)$.
\begin{proof}
For an arbitrary $x>x_0$ and $\xi>0$ we get with Fubini's Theorem
\begin{align*}
&\int_{t=0}^{\infty} e^{-\xi t} \Phi((t,\infty),[-\infty,x]) dt\\
&\hspace{5mm}=\int_{t=0}^{\infty} \int_{y\in \mathbb{R}} \int_{u=0}^{\infty} e^{-\xi t} \ind{\left[-\infty,x\right]} (y)  \ind{(t,\infty)}(u) \Phi(du,dy) dt\\
&\hspace{5mm}=\int_{u=0}^{\infty} \int_{y \in \mathbb{R}} \ind{\left[-\infty,x\right]} (y) \Big(\int_{t=0}^{\infty} e^{-\xi t}\ind{(t,\infty)}(u) dt \Big) \Phi(du,dy).
\end{align*}
Furthermore we obtain due to $\ind{\left[-\infty,x\right]}(y)=1-\ind{(x,\infty)}(y)$ and the definition of the C-L exponent 
\begin{align*}
&\hspace{5mm}= \frac{1}{\xi} \int_{u=0}^{\infty} \int_{y\in\mathbb{R}}  \left( 1-\ind{\left[-\infty,x\right]}(y)e^{-\xi u}-\ind{(x,\infty)}(u) \right) \Phi(du,dy)\\
&\hspace{5mm}=\frac{1}{\xi}\Big[\Psi(\xi,x)-\int_{y\in\mathbb{R}} \ind{(x,\infty)}(y) \Phi(\Rplus,dy) \Big]\\
&\hspace{5mm}=\frac{1}{\xi}\Big[\Psi(\xi,x)+\log F_A(x) \Big].
\end{align*}
\end{proof}
\end{lemma}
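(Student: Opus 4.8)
The plan is to rewrite the indicator-weighted tail of $\Phi$ as a double integral against $\Phi$, interchange it with the outer $t$-integral, and then recognize the resulting expression in terms of the C-L exponent $\Psi(\xi,x)$. First I would express the measure of the set $(t,\infty)\times[-\infty,x]$ as $\int\int \ind{(t,\infty)}(u)\,\ind{[-\infty,x]}(y)\,\Phi(du,dy)$ and, since the integrand is nonnegative, invoke Tonelli's theorem to exchange this with $\intnu e^{-\xi t}(\LargerCdot)\,dt$. The inner time integral then collapses to the elementary computation $\intnu e^{-\xi t}\ind{(t,\infty)}(u)\,dt = (1-e^{-\xi u})/\xi$, because $\ind{(t,\infty)}(u)=1$ exactly when $t<u$. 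This leaves the left-hand side equal to $\frac{1}{\xi}\int\int \ind{[-\infty,x]}(y)(1-e^{-\xi u})\,\Phi(du,dy)$.

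The second step is purely algebraic. On the support of $\Phi$ one has $\ind{[-\infty,x]}(y)=\ind{[x_0,x]}(y)$, and a direct comparison shows that the integrand of $\Psi(\xi,x)$, namely $1-e^{-\xi u}\ind{[x_0,x]}(y)$, exceeds $\ind{[x_0,x]}(y)(1-e^{-\xi u})$ by exactly $\ind{(x,\infty)}(y)$. Integrating this difference against $\Phi$ therefore gives $\int\int \ind{[x_0,x]}(y)(1-e^{-\xi u})\,\Phi(du,dy)=\Psi(\xi,x)-\Phi(\Rplus,(x,\infty))$.

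It remains to identify the leftover mass term. Evaluating the C-L exponent at $s=0$ kills the exponential factor and yields $\Psi(0,x)=\Phi(\Rplus,(x,\infty))$; since the recalled relation gives $\Psi(0,x)=-\log F_A(x)$, this mass is precisely $-\log F_A(x)$. Reassembling the pieces and dividing by $\xi$ produces $\frac{1}{\xi}\big(\Psi(\xi,x)+\log F_A(x)\big)$, which is the claimed identity.

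This argument is largely mechanical, so I do not anticipate a genuine obstacle; the only points deserving care are justifying the Tonelli interchange from nonnegativity, keeping careful track of the support of $\Phi$ on $(x_0,\infty)$ so that $\ind{[-\infty,x]}$ and $\ind{[x_0,x]}$ may be identified, and using the sign in $\Psi(0,x)=-\log F_A(x)$ correctly when extracting the logarithmic term.
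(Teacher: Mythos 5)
Your proposal is correct and follows essentially the same route as the paper's proof: expand the tail measure as a double integral of indicators, interchange with the $t$-integral, evaluate $\intnu e^{-\xi t}\ind{(t,\infty)}(u)\,dt=(1-e^{-\xi u})/\xi$, and split the integrand as $(1-e^{-\xi u}\ind{[-\infty,x]}(y))-\ind{(x,\infty)}(y)$ to identify $\Psi(\xi,x)$ and $\Psi(0,x)=-\log F_A(x)$. Your use of Tonelli (justified by nonnegativity) and your correct placement of the argument $y$ in $\ind{(x,\infty)}$ are, if anything, slightly cleaner than the paper's write-up.
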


\begin{lemma}\label{LemmazuGtundFt}
The functions $G_t$ and $F_t$ defined in \eqref{VerteilungsfunktionCTRM} and \eqref{VerteilungsfunktionOCTRM} are distribution functions. Furthermore, for fixed $x\in\rr$ the mappings $t \mapsto G_t(x)$ and $t \mapsto F_t(x)$ for $t>0$ are right-continuous. Additionally for an arbitrary $\xi>0$ and any $x_0<x<x_F$ we have
\begin{align}\label{LTvonGt}
\intnu e^{-\xi t} G_t(x) dt &= \frac{1}{\xi}\frac{\Psi_D(\xi)}{\Psi(\xi,x)}  
\end{align}
and
\begin{align}\label{LTvonFt}
\intnu e^{-\xi t} F_t(x) dt = \frac{1}{\xi} \frac{\Psi(\xi, x)+\log F_A(x)}{\Psi(\xi, x)}.
\end{align}
\end{lemma}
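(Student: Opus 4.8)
The plan is to dispatch the three assertions in the order (c), then (a), then (b), since the two Laplace transforms \eqref{LTvonGt}--\eqref{LTvonFt} carry the computational weight and also hand us the boundary behaviour in $x$ almost for free. Every integrand below is nonnegative, so all interchanges of integrals are justified by Tonelli's theorem with no integrability side-conditions. The organizing observation is that, once $u$ is integrated out, the inner measure in both \eqref{VerteilungsfunktionCTRM} and \eqref{VerteilungsfunktionOCTRM} is the sub-probability measure $\nu_{s,x}(d\tau):=\int_{u\in\rr}\ind{[-\infty,x]}(u)\,P_{(D(s),A(s))}(d\tau,du)=P(D(s)\in d\tau,\,A(s)\le x)$, whose Laplace transform in $\tau$ is read off from the $m=1$ instance of the finite-dimensional formula in Theorem \ref{TheoremJ1SumMaxProzess}:
\begin{equation*}
\int_{\tau=0}^{\infty} e^{-\xi\tau}\,\nu_{s,x}(d\tau)=\Exp\!\left(e^{-\xi D(s)}\indK{A(s)\le x}\right)=\mathcal{L}(P_{(D,A)})^{s}(\xi,x)=e^{-s\Psi(\xi,x)}.
\end{equation*}

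For \eqref{LTvonGt} I would write $G_t(x)=\int_{s=0}^{\infty}\int_{\tau=0}^{t}\Phi_D((t-\tau,\infty))\,\nu_{s,x}(d\tau)\,ds$, multiply by $e^{-\xi t}$ and integrate in $t$. After the substitution $r=t-\tau$ the $t$-integral factors as $e^{-\xi\tau}\int_{r=0}^{\infty}e^{-\xi r}\Phi_D((r,\infty))\,dr$, and one more application of Tonelli to the tail gives $\int_{r=0}^{\infty}e^{-\xi r}\Phi_D((r,\infty))\,dr=\frac1\xi\int_{(0,\infty)}(1-e^{-\xi v})\,\Phi_D(dv)=\frac1\xi\Psi_D(\xi)$, since $D$ is a driftless strictly $\beta$-stable subordinator with L\'evy measure $\Phi_D$. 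What is left is $\int_{s=0}^{\infty}e^{-s\Psi(\xi,x)}\,ds=1/\Psi(\xi,x)$, finite and positive for $x_0<x<x_F$ and $\xi>0$; multiplying the three factors gives $\frac1\xi\,\Psi_D(\xi)/\Psi(\xi,x)$, i.e. \eqref{LTvonGt}. The derivation of \eqref{LTvonFt} is word for word the same, except that the inner tail integral is now $\int_{r=0}^{\infty}e^{-\xi r}\Phi((r,\infty),[-\infty,x])\,dr$, which equals $\frac1\xi\big(\Psi(\xi,x)+\log F_A(x)\big)$ by Lemma \ref{lemmaCTRM1}; dividing by the same $\Psi(\xi,x)$ produced by the $s$-integral yields \eqref{LTvonFt}.

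For the distribution-function claim (a), monotonicity and right-continuity in $x$ are immediate from the integral representations: $x\mapsto\ind{[-\infty,x]}(u)$, and in the OCTRM case also $x\mapsto\Phi((t-\tau,\infty),[-\infty,x])$, are nondecreasing and right-continuous, and monotone convergence transfers both properties through the $\nu_{s,x}$- and $ds$-integrals. The normalizations I would read off from the transforms: as $x\uparrow x_F$ the cutoff $\ind{[-\infty,x]}$ tends to $1$ on the support of $\Phi$, so $\Psi(\xi,x)\to\Psi_D(\xi)$ while $\log F_A(x)\to0$, whence the right-hand sides of both transforms tend to $1/\xi$; as $x\downarrow x_0$ one has $\Psi(\xi,x)\ge-\log F_A(x)\to\infty$, with moreover $\Psi(\xi,x)-\big(-\log F_A(x)\big)=\int\ind{[-\infty,x]}(y)\,(1-e^{-\xi u})\,\Phi(du,dy)\to0$, so that the right-hand side of \eqref{LTvonGt} tends to $0$ and that of \eqref{LTvonFt}, equal to $\frac1\xi\big(1+\log F_A(x)/\Psi(\xi,x)\big)$, tends to $0$ as well. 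Since $x\mapsto G_t(x)$ and $x\mapsto F_t(x)$ are monotone, their one-sided limits exist pointwise in $t$; monotone convergence in $x$ identifies the Laplace transform of each limit, and uniqueness of the Laplace transform of a right-continuous function pins these limits down to $1$ and $0$.

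The step I expect to be the main obstacle is the right-continuity in $t$ of assertion (b), since as $t\downarrow t_0$ both the upper limit of the $\tau$-integral and the tail $K(r):=\Phi_D((r,\infty))$ in the integrand move simultaneously, and $K$ blows up precisely along the diagonal $\tau=t$ that is being swept in. Carrying out the $ds$-integral first turns both integrals into an integral against the potential measure $U_x(d\tau):=\int_{s=0}^{\infty}P(D(s)\in d\tau,\,A(s)\le x)\,ds$, which is dominated by the potential measure $U$ of $D$; since $D$ is strictly $\beta$-stable, $U(d\tau)$ has a density proportional to $\tau^{\beta-1}$, locally bounded on $[t_0,\infty)$ for $t_0>0$. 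I would then split $G_t(x)-G_{t_0}(x)$ into the contribution of $\tau\in[0,t_0)$ and the diagonal contribution on $\tau\in[t_0,t)$. On $[0,t_0)$ the difference $K(t-\tau)-K(t_0-\tau)\le0$ tends to $0$ by right-continuity of the tail and is dominated by the $U_x$-integrable envelope $K(t_0-\tau)$ (whose integral is exactly $G_{t_0}(x)<\infty$), so this part vanishes by dominated convergence; the diagonal part is bounded by $\big(\sup_{[t_0,\infty)}\tfrac{dU}{d\tau}\big)\int_0^{t-t_0}K(\rho)\,d\rho$, which tends to $0$ because $K\in L^1_{\mathrm{loc}}$ (indeed $K(r)\sim c\,r^{-\beta}$ with $\beta<1$). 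Hence $G_t(x)\to G_{t_0}(x)$, and the OCTRM case follows verbatim with $\Phi((r,\infty),[-\infty,x])\le K(r)$ in place of $K$.
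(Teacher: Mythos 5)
Your Laplace-transform computations \eqref{LTvonGt}--\eqref{LTvonFt} are correct and follow essentially the same route as the paper: Tonelli, the substitution $r=t-\tau$, the identity $\int_0^\infty e^{-\xi r}\Phi_D((r,\infty))\,dr=\Psi_D(\xi)/\xi$ (resp.\ Lemma \ref{lemmaCTRM1} for the OCTRM), and finally $\int_0^\infty e^{-s\Psi(\xi,x)}\,ds=1/\Psi(\xi,x)$. Where you genuinely diverge is in the two qualitative claims. For right-continuity in $t$ the paper avoids any explicit knowledge of the potential measure: it bounds the diagonal term by $1-\int_0^\infty\int_0^{t}\Phi_D(t+h-\tau,\infty)P_{D(s)}(d\tau)\,ds$ using the identity $\int_0^\infty\int_0^{t}\Phi_D(t-\tau,\infty)P_{D(s)}(d\tau)\,ds=1$ (Corollary 6.2 of Kesten) and lets monotone convergence finish; you instead use the explicit potential density $\tau^{\beta-1}/\Gamma(\beta)$ of the stable subordinator together with local integrability of $r\mapsto\Phi_D((r,\infty))\sim c\,r^{-\beta}$. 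Both arguments are valid; the paper's is softer and would survive for more general strictly increasing subordinators, while yours is more quantitative (it yields a rate $(t-t_0)^{1-\beta}$) but leans on the stable structure.

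The one place you should shore up is the normalization $\lim_{x\to\infty}G_t(x)=1$ and $\lim_{x\to-\infty}G_t(x)=0$ (likewise for $F_t$). Reading these off from the Laplace transforms identifies the monotone limit $g(t):=\lim_{x}G_t(x)$ only for Lebesgue-almost every $t$, and the right-continuity in $t$ that you prove for each fixed $x$ does not automatically pass to the monotone limit in $x$ (a pointwise supremum of right-continuous functions need not be right-continuous), so the ``uniqueness of the Laplace transform of a right-continuous function'' you invoke is not yet available for $g$. The gap is easily closed with tools you already have: either observe that $g(t)=\int_0^\infty\int_0^t\Phi_D(t-\tau,\infty)P_{D(s)}(d\tau)\,ds$ and run your right-continuity-in-$t$ argument on $g$ itself, or do as the paper does and invoke Kesten's identity directly, which gives $g\equiv 1$ at every $t$ and simultaneously supplies the dominating function used in the dominated-convergence proofs of right-continuity in $x$.
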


\begin{proof}
We first show that $G_t$ and $F_t$ are distribution functions.  For an arbitrary $h>0$ we get on the one hand that
\[\lim_{h \downarrow 0} \ind{\left[-\infty,x+h\right]}(u)=\ind{\left[-\infty,x\right]}(u)\]
and on the other hand we we have 
\begin{align*}
\left|\Phi_D(t-\tau,\infty)\ind{\left[-\infty,x+h\right]}(u)\right| \leq \Phi_{D}(t-\tau,\infty).
\end{align*}
Since
\begin{align*} 
&\int_{s=0}^{\infty} \int_{u \in \mathbb{R}} \int_{\tau=0}^{t} \Phi_D(t-\tau,\infty) P_{(D(s),A(s))}(d\tau,du) ds\\
&\hspace{5mm}=\int_{s=0}^{\infty} \int_{\tau=0}^{t} \Phi_D(t-\tau,\infty) P_{D(s)} (d\tau) ds\\
&\hspace{5mm}= 1
\end{align*}
where we used in the last line Corollary 6.2 in \cite{Kesten}, it follows with  dominated convergence  that \[\lim_{h \downarrow 0} G_t(x+h)=G_t(x).\] 
Similarly we can show with the dominated convergence  that $\lim_{x \rightarrow \infty} G_t(x)=1$
and ${\lim_{x \rightarrow -\infty} G_t(x)=0}$. Monotonicity is obvious. That $F_t(x)$ is right-continuous follows likewise with the dominated convergence  since 
\begin{align*}
\ind{[-\infty,x]}(u)\Phi((t-\tau,\infty),[-\infty,x]) \leq \Phi_D(t-\tau,\infty)
\end{align*}
and hence we get as above that $\lim_{h \downarrow 0} F_t(x+h)=F_t(x)$. With the same argument it follows that ${\lim_{x \rightarrow \infty} F_t(x)=1}$, ${\lim_{x \rightarrow -\infty} F_t(x)=0}$ as well. Monotonicity of $F_t(x)$ is again obvious.\\
Next we show that for a fixed $x\in\rr$ the mapping $t \rightarrow G_t(x)$ is right-continuous. For an arbitrary $h>0$ we get
\begin{align*}
&|G_t(x)-G_{t+h}(x)| \\
&\hspace{5mm}=\int_{s=0}^{\infty} \int_{u \in \mathbb{R}} \int_{\tau=0}^{t} \Big[ \Phi_D(t-\tau,\infty)-\Phi_D(t+h-\tau,\infty)\Big] \ind{\left[-\infty,x\right]}(u) P_{(D(s),A(s))}(d\tau,du) ds\\
&\hspace{10mm}-\int_{s=0}^{\infty} \int_{u \in \mathbb{R}} \int_{\tau=t}^{t+h} \Phi_D(t+h-\tau,\infty) \ind{\left[-\infty,x\right]}(u) P_{(D(s),A(s))}(d\tau,du) ds\\
&\hspace{5mm}=:I_{h}^{(1)}-J_h^{(1)}.
\end{align*}
We first consider $I_{h}^{(1)}$ and obtain as in the proof of Theorem 3.1 in \cite{CTRWtriangular} that $I_{h}^{(1)}$ converges to $0$ as $h \downarrow 0$. In fact
\begin{align*}
I_{h}^{(1)} \leq \int_{s=0}^{\infty} \int_{\tau=0}^{t} \Big[ \Phi_D(t-\tau,\infty)-\Phi_D(t+h-\tau,\infty)\Big]  P_{D(s)}(d\tau) ds.
\end{align*}
Due to the fact that $v \mapsto \Phi_D(v,\infty)$ is right-continuous, we get as $h \downarrow 0$ 
\begin{align*}
\Phi_D(t-\tau,\infty)-\Phi_D(t+h-\tau,\infty) \longrightarrow 0,
\end{align*}
for all $0 \leq \tau \leq t$. Additionally we have
\begin{align*}
&\Phi_D(t-\tau,\infty)-\Phi_D(t+h-\tau,\infty) \leq \Phi_D(t-\tau,\infty)
\end{align*}
and
\begin{align*}
\int_{s=0}^{\infty} \int_{\tau=0}^t \Phi_D(t-\tau,\infty) P_{D(s)} (d\tau) ds
&=1, 
\end{align*}
due to Corollary 6.2 in \cite{Kesten}
it then follows that $I_h^{(1)} \rightarrow 0$ as $h\downarrow 0$. It remains to show that $J_{h}^{(1)}\rightarrow 0$ as $h \downarrow 0$. This also follows along the lines of proof of Theorem 3.1 in \cite{CTRWtriangular}. We have
\begin{align*}
J_{h}^{(1)} & \leq \int_{s=0}^{\infty}  \int_{\tau=t}^{t+h} \Phi_D (t+h-\tau,\infty) P_{D(s)}(d\tau) ds\\
&=\int_{s=0}^{\infty}\int_{\tau=0}^{t+h} \Phi_D(t+h-\tau,\infty) P_{D(s)}(d\tau) ds-\int_{s=0}^{\infty}\int_{\tau=0}^{t} \Phi_D(t+h-\tau,\infty)P_{D(s)}(d\tau) ds\\
&=1-\int_{s=0}^{\infty}\int_{\tau=0}^{t} \Phi_D(t+h-\tau,\infty) P_{D(s)}(d\tau).
\end{align*}
It then follows that
\begin{align*}
1-\int_{s=0}^{\infty}\int_{\tau=0}^{t} \Phi_D(t+h-\tau,\infty) P_{D(s)}(d\tau) \rightarrow  0
\end{align*}
as $h \downarrow 0$. Hence $t \mapsto G_t(x)$ is right-continuous. We now show that $t \mapsto F_t(x)$ is right-continuous, too. For $t >0$ and $h>0$ we get
\begin{align*}
&|F_t(x)-F_{t+h}(x)|\\
&\hspace{5mm}=\int_{s=0}^{\infty}\int_{u \in \mathbb{R}}\int_{\tau=0}^{t} \ind{[-\infty,x]}(u)\big[\Phi((t-\tau,\infty),[-\infty,x])\\
&\hspace{10mm}-\Phi((t+h-\tau,\infty),[-\infty,x])\big]P_{(D(s),A(s))}(d\tau, du) ds\\
&\hspace{10mm}-\int_{s=0}^{\infty}\int_{u \in \mathbb{R}} \int_{\tau=t}^{t+h} \ind{[-\infty,x]} \Phi\big((t+h-\tau,\infty),\left[-\infty,x \right]\big) P_{(D(s),A(s))}(d\tau,du) ds\\
&\hspace{5mm}=:I_h^{(2)} - J_h^{(2)}.
\end{align*}
It follows as for $I_h^{(1)}$ and $J_h^{(1)}$ above, that as $h \downarrow 0$
\begin{align*}
I_h^{(2)} &\leq \int_{s=0}^{\infty} \int_{\tau=0}^{t} \Phi \big((t-\tau,\infty),\left[-\infty,x \right]\big)-\Phi\big((t+h-\tau,\infty),\left[-\infty,x\right]\big)P_{D(s)}(d\tau) ds \rightarrow 0\\
\hspace{10mm}\text{and }&\\
J_h^{(2)} &\leq \int_{s=0}^{\infty} \int_{\tau=t}^{t+h} \Phi\big((t+h-\tau,\infty),\left[-\infty,x \right]\big)P_{D(s)}(d\tau) ds \rightarrow 0.
\end{align*}
Consequently $t \mapsto F_t(x)$ is also right-continuous. Next we show \eqref{LTvonGt}. Compute
\begin{align}
&\int_{t=0}^{\infty} e^{-\xi t} G_t(x) dt \nonumber\\
&\hspace{5mm}= \int_{t=0}^{\infty} \int_{s=0}^{\infty} \int_{u\in\mathbb{R}} \int_{\tau=0}^t  e^{-\xi t}\Phi_D(t-\tau,\infty) \ind{\left[-\infty,x\right]}(u) P_{(D(s),A(s))}(d\tau,du) ds\, dt \nonumber\\
&\hspace{5mm}=\int_{s=0}^{\infty}\int_{u \in \mathbb{R}}\int_{\tau=0}^{\infty} \ind{\left[-\infty,x\right]}(u) \left( \int_{t=0}^{\infty} e^{-\xi t} \ind{\left[0,t\right]}(\tau) \Phi_D(t-\tau,\infty) dt \right) P_{(D(s),A(s))}(d\tau,du) ds \nonumber  \\
&\hspace{5mm}=\int_{s=0}^{\infty}\int_{u \in \mathbb{R}}\int_{\tau=0}^{\infty} \ind{\left[-\infty,x\right]}(u) \left( \int_{t=\tau}^{\infty} e^{-\xi t} \Phi_D (t-\tau,\infty) dt \right) P_{(D(s),A(s))}(d\tau,du) ds. \label{BeweisGrenzverteilung1}
\end{align}
Since
\begin{align*}
\int_{t=\tau}^{\infty} e^{-\xi t} \Phi_D(t-\tau,\infty)dt
&= \int_{z=0}^{\infty} \int_{t=0}^{\infty} e^{-\xi t} \ind{(\tau,\infty)}(t)\ind{(t-\tau,\infty)}(z) dt \, \Phi_D(dz)\\
&=\int_{z=0}^{\infty} \int_{t=0}^{\infty} e^{-\xi t} \ind{(\tau,z+\tau)}(t) dt \, \Phi_D (dz) \\
&=\frac{1}{\xi} e^{-\xi \tau} \int_{z=0}^{\infty} (1-e^{-\xi z}) \Phi_D(dz)\\
&=\frac{1}{\xi}e^{-\xi \tau} \Psi_D(\xi),
\end{align*}
and inserting this in \eqref{BeweisGrenzverteilung1}, we obtain
\begin{align*}
&\intnu e^{-\xi t} G_t(x) dt\\
&\hspace{5mm}=\frac{\Psi_D(\xi)}{\xi} \int_{s=0}^{\infty} \left( \int_{u \in \mathbb{R}} \int_{\tau=0}^{\infty} e^{-\xi \tau} \ind{\left[-\infty,x\right]}(u) P_{(D(s),A(s))}(d\tau,du)\right) ds\\
&\hspace{5mm}=\frac{\Psi_D(\xi)}{\xi} \int_{s=0}^{\infty} \mathcal{L} \left(P_{(D(s),A(s))}\right)(\xi,x) ds\\
&\hspace{5mm}=\frac{\Psi_D(\xi)}{\xi} \int_{s=0}^{\infty} \exp(-s \Psi(\xi,x)) ds\\
&\hspace{5mm}=\frac{1}{\xi}\frac{\Psi_D(\xi)}{\Psi(\xi,x)}.
\end{align*}
Similarly we can show \eqref{LTvonFt}, because with change of variables $v=t-\tau$ we have
\begin{align*}
&\int_{t=0}^{\infty} e^{- \xi t} F_t(x) dt\\
&\hspace{3mm}=\int_{t=0}^{\infty}\int_{s=0}^{\infty} \int_{u \in \mathbb{R}} \int_{\tau=0}^{t} e^{-\xi t} \ind{\left[-\infty,x\right]}(u)\Phi ((t-\tau,\infty),[-\infty,x]) P_{(D(s),A(s))} (d\tau,du) ds\ dt\\
&\hspace{3mm}=\int_{s=0}^{\infty} \int_{u \in \mathbb{R}} \int_{\tau=0}^{\infty} \int_{t=\tau}^{\infty} e^{-\xi t} \ind{\left[-\infty,x\right]}(u)\Phi ((t-\tau,\infty),[-\infty,x]) dt P_{(D(s),A(s))} (d\tau,du) ds\\
&\hspace{3mm}=\int_{s=0}^{\infty} \int_{u \in \mathbb{R}} \int_{\tau=0}^{\infty} \int_{v=0}^{\infty} e^{-\xi(v+\tau)} \ind{\left[-\infty,x\right]}(u)\Phi ((v,\infty),[-\infty,x]) dv P_{(D(s),A(s))} (d\tau,du) ds\\
\end{align*}
and with Lemma \ref{lemmaCTRM1} it follows that
\begin{align*}
&\int_{t=0}^{\infty} e^{- \xi t} F_t(x) dt\\
&\hspace{5mm}=\frac{1}{\xi} \Big(\Psi(\xi, x)+\log F_A(x)\Big) \int_{s=0}^{\infty} \Big( \int_{\tau=0}^{\infty} \int_{u \in \mathbb{R}} \ind{\left[-\infty,x\right]}(u) e^{-\xi \tau} P_{(D(s),A(s))} (d\tau,du) \Big) ds\\
&\hspace{5mm}=\frac{1}{\xi} \Big(\Psi(\xi, x)+\log F_A(x)\Big) \int_{s=0}^{\infty} \exp({-s \Psi(\xi, x)}) ds\\
&\hspace{5mm}=\frac{1}{\xi} \frac{\Psi(\xi, x)+\log F_A(x)}{\Psi(\xi, x)}.
\end{align*}
\end{proof}

In the following let $x_r$ and $x_l$ denote the left resp. the right endpoint of the distribution function $F_J$ of $J$, i.e.
\[x_{r}:=\sup \left\{x: F_{J}(x)<1\right\} \text{ and } x_{l}:=\inf \left\{x: F_{J}(x)>0\right\}.\]
Moreover, let $\tilde P_W(s)=\mathbb E[e^{-sW}]$, $s\geq 0$ denote the Laplace transform of $W$.

\begin{lemma} \label{lemmaLaplaceCTRMundOCTRM} 
\ \\
\begin{itemize}
\item[(a)] For the \textbf{CTRM process} $\left\{V(t)\right\}_{t>0}=\left\{M(N(t))\right\}_{t>0}$  we have for all $\xi>0$ and all $x_{l}<x<x_r$ that
\begin{eqnarray*}
\int_{0}^{\infty} e^{-\xi t}P\left(V(t) \leq x \right) dt =\frac{1}{\xi} \frac{1-\tilde{P}_W(\xi)}{1-\mathcal{L}(P_{(W,J)})(\xi,x)}.
\end{eqnarray*}
\item[(b)] For the \textbf{OCTRM process} $\left\{U(t)\right\}_{t>0}=\left\{M(N(t)+1)\right\}_{t>0}$ we have for all $\xi>0$ and all $x_{l}<x<x_r$ that
\begin{eqnarray*}
\int_{0}^{\infty} e^{-\xi t}P\left(U(t) \leq x \right) dt=\frac{1}{\xi}\frac{F_J(x)-\mathcal{L}\left(P_{(W,J)}\right)(\xi,x)}{1-\mathcal{L}\left(P_{(W,J)}\right)(\xi,x)}.
\end{eqnarray*}
\end{itemize}
\end{lemma}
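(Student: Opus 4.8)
The plan is to compute both Laplace transforms directly from a renewal decomposition of the events $\{V(t)\le x\}$ and $\{U(t)\le x\}$, exploiting the i.i.d.\ structure of the pairs $(W_i,J_i)$. The central object is the sub-probability measure $\mu_x(ds):=P(W\in ds,\,J\le x)$ on $\Rplus$, whose Laplace transform in $s$ is exactly the C-L transform at $(\xi,x)$: $\int_0^\infty e^{-\xi s}\mu_x(ds)=\Exp\bigl[e^{-\xi W}\ind{\{J\le x\}}\bigr]=\mathcal L(P_{(W,J)})(\xi,x)$. Since the pairs are i.i.d., the measure describing $\{S(n)\in ds,\,J_1\le x,\dots,J_n\le x\}$ is the $n$-fold convolution $\mu_x^{*n}$, with Laplace transform $\bigl(\mathcal L(P_{(W,J)})(\xi,x)\bigr)^n$.

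First I would decompose by the value of $N(t)$, using $\{N(t)=n\}=\{S(n)\le t<S(n+1)\}$. For the CTRM, $\{V(t)\le x\}\cap\{N(t)=n\}$ requires $S(n)\le t$, the first $n$ jumps $J_1,\dots,J_n\le x$, and an overshoot $W_{n+1}>t-S(n)$, with no constraint on $J_{n+1}$. As $(W_{n+1},J_{n+1})$ is independent of the first $n$ pairs, summing over $n\ge 0$ (with the empty-maximum convention $M(0)=-\infty$, so the $n=0$ term is $P(W>t)$) gives
\begin{align*}
P(V(t)\le x)=\sum_{n=0}^\infty\int_{[0,t]}\overline F_W(t-s)\,\mu_x^{*n}(ds),\qquad \overline F_W(u):=P(W>u).
\end{align*}
For the OCTRM one additionally imposes $J_{n+1}\le x$, so the overshoot kernel becomes the joint tail $\nu_x(u):=P(W>u,\,J\le x)$, yielding
\begin{align*}
P(U(t)\le x)=\sum_{n=0}^\infty\int_{[0,t]}\nu_x(t-s)\,\mu_x^{*n}(ds).
\end{align*}

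Next I would take the Laplace transform in $t$. All summands are nonnegative, so Tonelli lets me interchange the sum, the Laplace integral, and the convolution; the transform of the $n$-th term factorizes into the transform of the kernel times $\bigl(\mathcal L(P_{(W,J)})(\xi,x)\bigr)^n$, and the geometric series sums to $\bigl(1-\mathcal L(P_{(W,J)})(\xi,x)\bigr)^{-1}$. Two elementary computations, obtained by integrating in $t$ first, then close the argument: $\int_0^\infty e^{-\xi t}\overline F_W(t)\,dt=\xi^{-1}(1-\tilde P_W(\xi))$ and $\int_0^\infty e^{-\xi t}\nu_x(t)\,dt=\xi^{-1}\bigl(F_J(x)-\mathcal L(P_{(W,J)})(\xi,x)\bigr)$. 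Multiplying each kernel transform by the summed geometric series produces exactly the two claimed formulas.

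The step needing the most care is the convergence of the geometric series, i.e.\ the strict inequality $\mathcal L(P_{(W,J)})(\xi,x)<1$. This holds on the stated range since $\mathcal L(P_{(W,J)})(\xi,x)=\Exp[e^{-\xi W}\ind{\{J\le x\}}]\le F_J(x)<1$ for $x<x_r$, which is precisely why the identities are asserted only for $x_l<x<x_r$. The remaining bookkeeping is to track which jumps are constrained below $x$ in each model — the first $N(t)$ for the CTRM versus the first $N(t)+1$ for the OCTRM — and to treat the $n=0$ term consistently with $M(0)=-\infty$.
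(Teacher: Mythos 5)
Your proposal is correct and follows essentially the same route as the paper: both decompose on the event $\{N(t)=n\}$ using $\{N(t)\ge n\}=\{S(n)\le t\}$, exploit the i.i.d.\ structure to turn the $n$-th term into $\bigl(\mathcal L(P_{(W,J)})(\xi,x)\bigr)^n$ times an overshoot factor, and sum the geometric series, whose convergence is guaranteed by $\mathcal L(P_{(W,J)})(\xi,x)\le F_J(x)<1$ for $x<x_r$. The only cosmetic difference is that you phrase the $n$-th term as a renewal convolution against the tail kernel $\overline F_W$ (resp.\ the joint tail $\nu_x$), whereas the paper writes it as the difference of the Laplace transforms of $P(M(n)\le x, S(n)\le t)$ and $P(M(n)\le x, S(n+1)\le t)$; the two computations are algebraically identical.
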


\begin{proof}
First we will show (a). Since $\left\{N(t) \geq n \right\}=\left\{S(n) \leq t \right\}$ we get
\begin{align}
\begin{split}\label{BeweisLTCTRM1}
&\intnu e^{-\xi t} P\left(M(N(t)) \leq x\right)dt\\
&\hspace{5mm}=\sumnu \intnu e^{-\xi t} P \left(M(N(t)) \leq x, N(t)=n \right) dt \\
&\hspace{5mm}= \sumnu \intnu e^{-\xi t} \Big[ P\left(M(n) \leq x, N(t) \geq n \right) - P \left(M(n) \leq x, N(t) \geq n+1 \right)\Big] dt\\
&\hspace{5mm}= \sumnu \intnu e^{-\xi t} \Big[ P\left(M(n) \leq x, S(n) \leq t \right)-P\left(M(n) \leq x, S(n+1) \leq t \right) \Big] dt.
\end{split}
\end{align}
Observe that in view of Proposition 2.3 in \cite{HeesScheffler1} we have
\begin{align}
\begin{split}\label{BeweisLTCTRM2}
&\intnu e^{-\xi t} P\left(M(n) \leq x, S(n) \leq t \right) dt 
=\intnu \int \indK{M(n) \leq x}  e^{-\xi t} \ind{\left\{S(n) \leq t\right\}} dt \, dP\\
&=\int  \indK{M(n) \leq x} \frac{1}{\xi} e^{-\xi S(n)}dP
=\frac{1}{\xi} \mathcal{L} \left(P_{(S(n),M(n))}\right)(\xi,x)\\
&=\frac{1}{\xi} \left(\mathcal{L}\left(P_{(W,J)}\right)(\xi,x)\right)^n.
\end{split}
\end{align}
Changing the order of integration and with a change of variables we obtain
\begin{align}\label{BeweisLTCTRMNeu}
\begin{split}
&\intnu e^{-\xi t} P\left(M(n) \leq x, S(n+1)\leq t \right) dt\\
&\hspace{5mm}=\intnu e^{-\xi t} P\left(M(n) \leq x, S(n)+W_{n+1} \leq t \right) dt\\
&\hspace{5mm}= \intnu\int_{0}^{t}  e^{-\xi t} P\left(M(n) \leq x, S(n) \leq t-\tau\right) P_W(d \tau) dt \\
&\hspace{5mm}= \intnu \int_{\tau}^{\infty} e^{-\xi t} P\left(M(n) \leq x, S(n) \leq t-\tau\right)  dt P_W(d \tau)\\ 
&\hspace{5mm}=\intnu \intnu e^{-\xi(\tau + u)} P\left(M(n) \leq x, S(n) \leq u \right) du P_W(d \tau)  \\
&\hspace{5mm}= \left(  \intnu e^{-\xi \tau} P_W(d\tau)\right)\left( \intnu e^{-\xi u} P\left(M(n) \leq x, S(n) \leq u \right) du \right)\\
&\hspace{5mm}= \tilde{P}_W(\xi) \frac{1}{\xi} \left( \mathcal{L} \left(P_{(W,J)}\right)(\xi,x)\right)^n.
\end{split}
\end{align}
If we put \eqref{BeweisLTCTRM2} and \eqref{BeweisLTCTRMNeu} in \eqref{BeweisLTCTRM1} we receive
\begin{align*}
&\intnu e^{-\xi t} P\left(M(N(t)) \leq x \right) dt\\
&\hspace{5mm}=\sumnu{\left(\frac{1}{\xi}\left(\mathcal{L}\left(P_{(W,J)}\right)(\xi,x)\right)^n-\tilde{P}_W(\xi)\frac{1}{\xi}\left(\mathcal{L}\left(P_{(W,J)}\right)(\xi,x)\right)^n\right)}\\
&\hspace{5mm}=\frac{1}{\xi} \left(1-\tilde{P}_W(\xi)\right)  \sumnu \left(\mathcal{L}\left(P_{(W,J)}\right)(\xi,x)\right)^n\\
&\hspace{5mm}=\frac{1-\tilde{P}_W(\xi)}{\xi}\frac{1}{1-\mathcal{L}\left(P_{(W,J)}\right)(\xi,x)}.
\end{align*}
Now we will prove (b). As in the proof of (a) we have
\begin{align}\label{BeweisLTCTRM4}
&\intnu e^{-\xi t} P\left( M(N(t)+1)\leq x\right) dt\\
&\hspace{5mm}=\sumnu \intnu e^{-\xi t} \Big[P\left(M(N(t)+1) \leq x, N(t)=n \right)\Big]dt \nonumber\\
&\hspace{5mm}=\sumnu \intnu e^{-\xi t}\Big[P\left(M(n+1) \leq x, N(t)\geq n \right)-P\left(M(n+1) \leq x, N(t)\geq n+1 \right)\Big] \nonumber\\
&\hspace{5mm}=\sumnu \intnu e^{-\xi t} \Big[P\left(M(n+1) \leq x, S(n)\leq t \right)-P\left(M(n+1) \leq x, S(n+1)\leq t \right)\Big]. \nonumber
\end{align}
The first part of the integral simplifies to
\begin{align}
\begin{split}\label{BeweisLTCTRM5}
&\intnu e^{-\xi t} P\left(M(n+1) \leq x, S(n) \leq t\right)dt=\int \indK{M(n+1) \leq x} \intnu e^{-\xi t} \indK{S(n) \leq t}dt dP\\
&\hspace{5mm}=\int \indK{M(n+1) \leq x} \frac{1}{\xi}e^{-\xi S(n)} dP=P\left(J_{n+1}\leq x\right) \int \indK{M(n)\leq x} \frac{1}{\xi}e^{-\xi S(n)} dP \\
&\hspace{5mm}=F_{J}(x)\frac{1}{\xi}\left(\mathcal{L}\left(P_{(W,J)}\right)(\xi,x)\right)^n.
\end{split}
\end{align}
For the second part of the integral we get
\begin{align}
\begin{split}\label{BeweisLTCTRM6}
&\intnu e^{-\xi t} P\left(M(n+1) \leq x, S(n+1) \leq t\right) dt=\int \indK{M(n+1)\leq x} \int_{S(n+1)}^{\infty} e^{-\xi t} dt dP\\
&\hspace{5mm}=\int \indK{M(n+1)\leq x}\frac{1}{\xi}e^{-\xi S(n+1)} dP=\frac{1}{\xi}\left(\mathcal{L}\left(P_{\left(W,J\right)}\right)(\xi,x)\right)^{n+1}.
\end{split}
\end{align}
If we put \eqref{BeweisLTCTRM5} and \eqref{BeweisLTCTRM6} in \eqref{BeweisLTCTRM4} we obtain
\begin{align*}
&\intnu e^{-\xi t} P\left(M(N(t)+1)\leq x\right)dt\\
&\hspace{5mm}=\sumnu F_J(x) \frac{1}{\xi}\left(\mathcal{L}\left(P_{(W,J)}\right)(\xi,x)\right)^n-\frac{1}{\xi}\left(\mathcal{L}\left(P_{\left(W,J\right)}\right)(\xi,x)\right)^{n+1}\\
&\hspace{5mm}=\frac{1}{\xi}\left( F_J(x)-\mathcal{L}\left(P_{\left(W,J\right)}\right)(\xi,x)\right) \sumnu \left(\mathcal{L}\left(P_{(W,J)}\right)(\xi,x)\right)^n\\
&\hspace{5mm}=\frac{1}{\xi}\frac{ F_J(x) -\mathcal{L}\left(P_{(W,J)}\right)(\xi,x)}{1-\mathcal{L}\left(P_{(W,J)}\right)(\xi,x)}
\end{align*}
and the proof is complete.
\end{proof}

\begin{lemma} \label{KonvergenzCLTransformierte}\ \\
Under the assumptions of Theorem \ref{Grenzwertsatz} we have:
\begin{itemize}
\item[(a)]For the \textbf{CTRM process} $\left\{V(t)\right\}_{t>0}$ we have for all $x_0<x<x_F$ and all $\xi>0$
\begin{align}
&\intnu e^{-\xi t} P\left(\tilde{b}(c)(V(ct)-\tilde{d}(c)) \leq x \right) dt \xrightarrow[c \rightarrow \infty]{} \frac{1}{\xi}\frac{\Psi_D(\xi)}{\Psi(\xi, x)}.
\end{align}
\item[(b)] For the \textbf{OCTRM process} $\left\{U(t)\right\}_{t>0}$ we have for all $x_0<x<x_F$ and all $\xi>0$ 
\begin{align}
&\intnu e^{-\xi t} P\left(\tilde{b}(c)(U(ct)-\tilde{d}(c)) \leq x \right) dt \xrightarrow[c \rightarrow \infty]{} \frac{1}{\xi}\frac{\Psi(\xi, x)+\log F_A(x)}{\Psi(\xi, x)}.
\end{align} 
\end{itemize}

\begin{proof}
We know from the proof of Theorem \ref{Grenzwertsatz} that there exists a regularly varying function $\tilde{a}(c)$ with index $\beta$ with $1/a(\tilde{a}(c)) \sim c$ as $c \rightarrow \infty$, furthermore functions $\tilde{b}(c)$ and $\tilde{d}(c)$, such that
\begin{eqnarray*}
\big(c^{-1}S(\tilde{a}(c)), \tilde{b}(c)(M(\tilde{a}(c))-\tilde{d}(c))\big) \xLongrightarrow[c \rightarrow \infty]{} \big(D,A\big).
\end{eqnarray*}
With the continuity theorem for the C-L transform (see Theorem 3.2 in \cite{HeesScheffler1}), it follows that
\begin{align*}
&\mathcal{L} \big(P_{(c^{-1} S(\tilde{a}(c)), \tilde{b}(c)(M(\tilde{a}(c))-\tilde{d}(c))}\big)(\xi,x) \Konvergenzcinfty \mathcal{L}(P_{(D,A)})(\xi,x)
\end{align*}
for all $\xi \in \Rplus$ and $x\in\rr$. Then it follows from Proposition 2.3 of \cite{HeesScheffler1} that
\begin{align}
\big( \mathcal{L} \big(P_{(c^{-1} W, \tilde{b}(c)(J-\tilde{d}(c)))} \big)(\xi,x) \big)^{\lfloor \tilde{a}(c) \rfloor} \Konvergenzcinfty \exp\left({-\Psi(\xi,x)}\right). \label{LemmaKonvergenzLTCTRM1}
\end{align}
If we now apply the logarithm on each side and use that  $\log z \sim  z-1$ as $z \rightarrow 1$ we get
\begin{align}
\lfloor \tilde{a}(c) \rfloor \left(1-\mathcal{L} \big(P_{(W,J)}\big)\big(\xi c^{-1},x\tilde{b}(c)^{-1}+\tilde{d}(c)\big)\right)\Konvergenzcinfty \Psi(\xi,x) \label{LemmaKonvergenzLTCTRM2}
\end{align}
for all $\xi \in \Rplus$ and $x\in\rr$. If we let $x=\infty$ in \eqref{LemmaKonvergenzLTCTRM2} it follows due to $\Psi(\xi,\infty)=\Psi_D(\xi)$ that
\begin{align}
\lfloor \tilde{a}(c) \rfloor \big(1-\tilde{P}_{W}(\xi c^{-1})\big) \Konvergenzcinfty \Psi_D(\xi). \label{LemmaKonvergenzLTCTRMGl1}
\end{align}
If we set $\xi=0$ it follows since $\Psi(0,x)=-\log F_A(x)$ that
\begin{align}
\lfloor \tilde{a}(c) \rfloor \left(1-F_{J}(x\tilde{b}(c)^{-1}+\tilde{d}(c))\right) \Konvergenzcinfty -\log F_{A}(x).\label{LemmaKonvergenzLTCTRMGl2}
\end{align}
Now we can prove (a). With the change of variables $r=ct$ we get
\begin{align*}
&\intnu e^{-\xi t} P\left(\tilde{b}(c)(V(ct)-\tilde{d}(c)) \leq x \right) dt\\
&\hspace{5mm}=\frac{1}{c} \intnu e^{- (\xi c^{-1})r}P \left(V(r)\leq x \tilde{b}(c)^{-1}+\tilde{d}(c) \right) dr.
\end{align*}
Using Lemma \ref{lemmaLaplaceCTRMundOCTRM} (a) we then obtain for all $x_0<x<x_F$
\begin{align*}
&\intnu e^{-\xi t} P\left(\tilde{b}(c)(V(ct)-\tilde{d}(c)) \leq x \right) dt=\frac{1}{c}\frac{c}{\xi }\frac{1-\tilde{P}_W(\xi c^{-1})}{1-\mathcal{L}\left(P_{(W,J)}\right)(\xi c^{-1}, x\tilde{b}(c)^{-1}+\tilde{d}(c))}.
\end{align*}
By \eqref{LemmaKonvergenzLTCTRM2} and \eqref{LemmaKonvergenzLTCTRMGl1} we then get
\begin{align*}
\intnu e^{-\xi t} P\left(\tilde{b}(c)(V(ct)-\tilde{d}(c)) \leq x \right) dt&=\frac{1}{\xi} \frac{ \lfloor \tilde{a}(c) \rfloor  (1-\tilde{P}_W(\xi c^{-1}))}{\lfloor \tilde{a}(c) \rfloor(1-\mathcal{L}(P_{(W,J)})(\xi c^{-1},x\tilde{b}(c)^{-1}+\tilde{d}(c)))}\\
&\xrightarrow[c \rightarrow \infty]{} \frac{1}{\xi}\frac{\Psi_D(\xi)}{\Psi(\xi,x)}.
\end{align*}
Now we'll prove (b). As in the proof for (a) we obtain with  $r=ct$
\begin{align*}
\intnu e^{-\xi t} P\left(\tilde{b}(c)(U(ct)-\tilde{d}(c)) \leq x \right) dt =\frac{1}{c}\intnu e^{-(\xi c^{-1})r}P\left(U(r)\leq x{\tilde{b}(c)}^{-1}+\tilde{d}(c)\right) dr.
\end{align*}
With Lemma \ref{lemmaLaplaceCTRMundOCTRM} (b) we have for all $x_0<x<x_F$ 
\begin{align*}
&\intnu e^{-\xi t} P\left(\tilde{b}(c)(U(ct)-\tilde{d}(c)) \leq x \right) dt\\
&=\frac{1}{c}\frac{c}{\xi} \frac{F_J(x\tilde{b}(c)^{-1}+\tilde{d}(c))-\mathcal{L}(P_{(W,J)})(\xi c^{-1},x \tilde{b}(c)^{-1}+\tilde{d}(c))}{1-\mathcal{L}(P_{(W,J)})(\xi c^{-1},x \tilde{b}(c)^{-1}+\tilde{d}(c))}.
\end{align*}
Finally, using \eqref{LemmaKonvergenzLTCTRM2} and \eqref{LemmaKonvergenzLTCTRMGl2} we then have
\begin{align*}
&\intnu e^{-\xi t} P\left(\tilde{b}(c)(U(ct)-\tilde{d}(c)) \leq x \right) dt\\
&\hspace{5mm}=\frac{1}{\xi}\frac{\lfloor \tilde{a}(c) \rfloor (F_J(x\tilde{b}(c)^{-1}+\tilde{d}(c))-1)+\lfloor \tilde{a}(c) \rfloor \big(1-\mathcal{L}(P_{(W,J)})(\xi c^{-1},x\tilde{b}(c)^{-1}+\tilde{d}(c))\big)}{\lfloor \tilde{a}(c) \rfloor \big(1-\mathcal{L}(P_{(W,J)})(\xi c^{-1},x\tilde{b}(c)^{-1}+\tilde{d}(c))\big)}\\
&\hspace{5mm}\xrightarrow[c \rightarrow \infty]{}\frac{1}{\xi} \frac{\Psi(\xi,x)+\log F_{A}(x)}{\Psi(\xi,x)}.
\end{align*}
and the proof is complete.
\end{proof}
\end{lemma}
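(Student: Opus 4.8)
The plan is to reduce both statements to the exact Laplace-transform identities of Lemma \ref{lemmaLaplaceCTRMundOCTRM} and then let $c\to\infty$, the whole difficulty being to control the asymptotics of the three scalar quantities $\tilde P_W(\xi c^{-1})$, $F_J(x\tilde b(c)^{-1}+\tilde d(c))$ and $\mathcal L(P_{(W,J)})(\xi c^{-1},x\tilde b(c)^{-1}+\tilde d(c))$ that appear there after the natural rescaling.

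First I would carry over from the proof of Theorem \ref{Grenzwertsatz} the one-dimensional convergence $\big(c^{-1}S(\tilde a(c)),\tilde b(c)(M(\tilde a(c))-\tilde d(c))\big)\xLongrightarrow[c \rightarrow \infty]{}(D,A)$, where $\tilde a$ is regularly varying of index $\beta$ with $1/a(\tilde a(c))\sim c$. Applying the continuity theorem for the C-L transform (Theorem 3.2 in \cite{HeesScheffler1}) gives convergence of the associated C-L transforms to $\exp(-\Psi(\xi,x))$, and Proposition 2.3 in \cite{HeesScheffler1} rewrites the transform of the rescaled sum/max vector as the $\lfloor\tilde a(c)\rfloor$-th power of the single-vector transform evaluated at the scaled arguments, using that multiplying $W$ by $c^{-1}$ converts $\xi$ into $\xi c^{-1}$ and that $\{\tilde b(c)(J-\tilde d(c))\le x\}=\{J\le x\tilde b(c)^{-1}+\tilde d(c)\}$. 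Taking logarithms and using $\log z\sim z-1$ as $z\to 1$ then yields the linearized limit \[\lfloor\tilde a(c)\rfloor\big(1-\mathcal L(P_{(W,J)})(\xi c^{-1},x\tilde b(c)^{-1}+\tilde d(c))\big)\Konvergenzcinfty\Psi(\xi,x).\]

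Evaluating this relation at the two boundary arguments supplies the remaining ingredients: at $x=\infty$, where $\Psi(\xi,\infty)=\Psi_D(\xi)$, it gives $\lfloor\tilde a(c)\rfloor(1-\tilde P_W(\xi c^{-1}))\Konvergenzcinfty\Psi_D(\xi)$, and at $\xi=0$, where $\Psi(0,x)=-\log F_A(x)$, it gives $\lfloor\tilde a(c)\rfloor(1-F_J(x\tilde b(c)^{-1}+\tilde d(c)))\Konvergenzcinfty-\log F_A(x)$. For part (a) I would then substitute $r=ct$ in the Laplace integral, which changes the decay rate to $\xi c^{-1}$ and the threshold to $x\tilde b(c)^{-1}+\tilde d(c)$, insert Lemma \ref{lemmaLaplaceCTRMundOCTRM}(a), and multiply numerator and denominator by $\lfloor\tilde a(c)\rfloor$; the two factors then converge to $\Psi_D(\xi)$ and $\Psi(\xi,x)$, giving $\tfrac{1}{\xi}\Psi_D(\xi)/\Psi(\xi,x)$. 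Part (b) is identical after inserting Lemma \ref{lemmaLaplaceCTRMundOCTRM}(b) and writing its numerator as $F_J-\mathcal L=-(1-F_J)+(1-\mathcal L)$, so that after multiplying by $\lfloor\tilde a(c)\rfloor$ the numerator tends to $\log F_A(x)+\Psi(\xi,x)$ while the denominator tends to $\Psi(\xi,x)$.

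The step I expect to be the main obstacle is the passage from the convergence of the powers $\big(\mathcal L(\cdots)\big)^{\lfloor\tilde a(c)\rfloor}\to e^{-\Psi(\xi,x)}$ to the linearized deficit $\lfloor\tilde a(c)\rfloor(1-\mathcal L(\cdots))\to\Psi(\xi,x)$: one must check that the base genuinely tends to $1$ so that the expansion $\log z\sim z-1$ applies, and, more delicately, that the limit remains valid at the boundary values $x=\infty$ and $\xi=0$ where the identities $\Psi(\xi,\infty)=\Psi_D(\xi)$ and $\Psi(0,x)=-\log F_A(x)$ are used. Once these asymptotics are secured, the remainder is only the bookkeeping of the scaling substitutions and the algebra of the two rational expressions.
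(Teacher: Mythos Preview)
Your proposal is correct and follows essentially the same approach as the paper's proof: the same rescaling, the same appeal to the continuity theorem and Proposition~2.3 of \cite{HeesScheffler1}, the same $\log z\sim z-1$ linearization, the same boundary evaluations at $x=\infty$ and $\xi=0$, and the same insertion of Lemma~\ref{lemmaLaplaceCTRMundOCTRM} after the substitution $r=ct$ with the $\lfloor\tilde a(c)\rfloor$-multiplication trick. Even your concern about the $\log z\sim z-1$ step and the boundary evaluations matches what the paper handles implicitly; there is no substantive difference.
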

We are now in the position to prove Theorem \ref{Grenzwertverteilungsfunktion}.
\begin{proof}[Proof of Theorem \ref{Grenzwertverteilungsfunktion}] In view of Theorem \ref{Grenzwertsatz} we know that
\begin{align}
\left\{\tilde{b}(c)(V(ct)-\tilde{d}(c)) \right\}_{t>0} \JKonvergenzc \left\{A(E(t)-)^{+}\right\}_{t>0}.
\end{align} 
Since the $J_1$-convergence implies the convergence in all points in which the process is continuous in probability it follows
\begin{align*}
P_{\tilde{b}(c)(V(ct)-\tilde{d}(c))} \xrightarrow[c \rightarrow \infty]{w} P_{A(E(t)-)^{+}}
\end{align*}
in all but countable many $t>0$, due to the fact that the process has c\`adl\`ag paths and hence no more than countable many discontinuities. As a consequence
\begin{align*}
F_{\tilde{b}(c)(V(ct)-\tilde{d}(c))}(x) \xrightarrow[c \rightarrow \infty]{w} F_{A(E(t)-)^{+}}(x)
\end{align*}
pointwise in all but countable many $t>0$ and $x \in \mathbb{R}$. Therefore
\begin{align*}
\int_{0}^{\infty} e^{-\xi t} P\left(\tilde{b}(c)(V(ct)-\tilde{d}(c))\leq x \right) dt \xrightarrow[c \rightarrow \infty]{} \int_{0}^{\infty} e^{-\xi t} P \left(A(E(t)-)^{+} \leq x \right) dt 
\end{align*} 
in all but countable many $x \in \mathbb{R}$. By Lemma \ref{KonvergenzCLTransformierte} we know that
\begin{align*}
\int_{0}^{\infty} e^{-\xi t} P\left(\tilde{b}(c)(V(ct)-\tilde{d}(c)) \leq x \right) dt \xrightarrow[c \rightarrow \infty]{} \frac{1}{\xi}\frac{\Psi_D(\xi)}{\Psi(\xi,x)}
\end{align*}
for all $\xi>0$ and all $x_0<x<x_F$. Together with Lemma \ref{LemmazuGtundFt} it follows that
\begin{align*}
\int_{0}^{\infty} e^{-\xi t} P \left(A(E(t)-)^{+} \leq x \right) dt  = \int_{0}^{\infty} e^{-\xi t} G_t(x) dt
\end{align*}
for all but countable many $x \in \mathbb{R}$. Due to the uniqueness of the Laplace transform it follows
\begin{align} 
P\left(A(E(t)-)^{+} \leq x \right)=G_t(x) \label{GleichheitGrenzwert}
\end{align}
for all but countable many $t>0$ and $x\in \mathbb{R}$. Since the sample paths of the process $\left\{A(E(t)-)^{+}\right\}_{t>0}$ are c\`adl\`ag and hence right-continuous and due to the fact that $t\mapsto G_t(x)$ is right-continuous it follows that the equality in \eqref{GleichheitGrenzwert} holds for all $t>0$ and for all but countable many $x\in \mathbb{R}$. Since $P\left(A(E(t)-)^{+} \leq x \right)$ and $G_t(x)$ are distribution functions, they are right-continuous as functions in x. Hence it follows that the equality in \eqref{GleichheitGrenzwert} holds for all $x \in \mathbb{R}$ and  all $t>0$. The proof for the OCTRM is similar.
\end{proof}

The following corollary answers the question under which conditions the CTRM and OCTRM limit processes are equal.

\begin{cor}\label{idepcase}
Under the assumptions of Theorem \ref{Grenzwertverteilungsfunktion} the distributions of the CTRM and OCTRM processes are equal at any point $t>0$ in time
if and only if $D$ and $A$ in \eqref{Annahme} are independent.
\end{cor}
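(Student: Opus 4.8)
The plan is to reduce the equality of the two limit laws to a scalar functional identity between their time-Laplace transforms, and then to match that identity with the independence criterion \eqref{DAind} by invoking Corollary 3.10 of \cite{HeesScheffler1}.

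First I would note that, for fixed $t>0$, the CTRM and OCTRM limits have distribution functions $G_t$ and $F_t$ from Theorem \ref{Grenzwertverteilungsfunktion}, and that both $t\mapsto G_t(x)$ and $t\mapsto F_t(x)$ are right-continuous by Lemma \ref{LemmazuGtundFt}. Hence $G_t(x)=F_t(x)$ for all $t>0$ is equivalent, by uniqueness of the Laplace transform, to equality of \eqref{LTvonGt} and \eqref{LTvonFt} for every $\xi>0$ and every $x_0<x<x_F$. Since $\Psi(\xi,x)\ge-\log F_A(x)>0$ on this range, clearing the common denominator turns this into
\[ \Psi_D(\xi)=\Psi(\xi,x)+\log F_A(x),\qquad \xi>0,\ x_0<x<x_F, \]
equivalently, using $\Psi_D(\xi)=\Psi(\xi,\infty)$ and $\log F_A(x)=-\Psi(0,x)$,
\[ \Psi(\xi,x)=\Psi(\xi,\infty)+\Psi(0,x). \]

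Next I would substitute the integral representation $\Psi(s,y)=\int_{\Rplus}\int_{[x_0,\infty)}(1-e^{-st}\ind{[x_0,y]}(w))\,\Phi(dt,dw)$ into all three terms; the key computation is the cancellation
\[ \Psi(\xi,\infty)+\Psi(0,x)-\Psi(\xi,x)=\int_{\{w>x\}}(1-e^{-\xi t})\,\Phi(dt,dw). \]
Thus the functional identity holds iff $\int_{\{w>x\}}(1-e^{-\xi t})\Phi(dt,dw)=0$ for all $\xi>0$ and all $x\in(x_0,x_F)$. The integrand is nonnegative and, for $\xi>0$, vanishes exactly on $\{t=0\}$, so the identity forces $\Phi(\{t>0\}\times(x,\infty))=0$ for every such $x$; letting $x\downarrow x_0$ yields $\Phi(\{t>0\}\times(x_0,\infty))=0$, and this support condition conversely makes the integral vanish. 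In words, the two limit laws coincide iff $\Phi$ puts no mass on points having both a strictly positive time coordinate and a mark strictly above the left endpoint $x_0$, i.e. iff $\Phi$ lives on the union of the two axes $\{t=0\}\cup\{w=x_0\}$. This is precisely the product structure \eqref{DAind}, so by Corollary 3.10 of \cite{HeesScheffler1} it is equivalent to independence of $D$ and $A$. As a concrete check of one implication I would substitute \eqref{DAind} directly: the $\varepsilon_0(dt)\Phi_A(dw)$ part contributes $\Phi_A((x,\infty))=-\log F_A(x)$ and the $\varepsilon_0(dw)\Phi_D(dt)$ part contributes $\int(1-e^{-\xi t})\Phi_D(dt)=\Psi_D(\xi)$, recovering $\Psi(\xi,x)=\Psi(\xi,\infty)+\Psi(0,x)$.

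The main obstacle I expect is the converse (equality $\Rightarrow$ independence): translating a one-parameter family of scalar identities into a statement about the full two-dimensional measure $\Phi$. The decisive point is the sign of $(1-e^{-\xi t})\ind{\{w>x\}}$, which upgrades \emph{the integral vanishes for every $\xi$} to \emph{the measure vanishes off the time axis on $\{w>x\}$}. Beyond that, the only care needed is the endpoint bookkeeping at $x_0$ — where the exponent measure $\Phi_A$ is typically infinite — and the identification of the resulting support condition with the (normalized) product form \eqref{DAind} used in Corollary 3.10.
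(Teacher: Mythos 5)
Your proposal is correct and follows essentially the same route as the paper: reduce equality of the two limit laws, via uniqueness of the Laplace transform and the formulas \eqref{LTderVF} and \eqref{LTderVFOCTRW}, to the identity $\Psi(\xi,x)=\Psi_D(\xi)-\log F_A(x)$, and then invoke Corollary 3.10 of \cite{HeesScheffler1}. The only difference is that you additionally verify, by a correct computation with the exponent measure, that this additive splitting of $\Psi$ is equivalent to the support condition \eqref{DAind}; the paper simply delegates that equivalence to the cited corollary.
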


\begin{proof}
In view of \eqref{LTderVF} and \eqref{LTderVFOCTRW} are equal the distributions of $A(E(t)-)^+$ and $A(E(t))$ are equal for all $t>0$ if and only if 
$\Psi(\xi,x)=\Psi_D(\xi)-\log F_A(x)$ for all $\xi>0$ and $x_0<x<x_F$. By Corollary 3.10 of \cite{HeesScheffler1} this is equivalent to the independence of $D$ and $A$.
\end{proof}

\begin{remark}
In \cite{Pancheva} the case that there can be a dependence between the waiting times and the subsequent jumps and that the waiting times have infinite mean is also considered. There it is assumed that there exists a function $m(x)$ with $m(x) \rightarrow 1$ as $x \rightarrow x_F$, such that
\begin{align} \label{asymptotischeUnab}
\overline{F}_W(y) - \overline{F}_{(W,J)}(y,x) \sim m(x) \overline{F}_W(y) \hspace{0.5cm}  \text{ as } y \rightarrow \infty,
\end{align}
where $\overline{F}:=1-F$ is the tail function. But this condition is equivalent to asymptotic independence. It is enough to consider the 1-Fr\'echet case. In the following we assume that there exists $a_n>0$ such that
\begin{align*}
a_n S(n) \WKonvergenz D,
\end{align*}
where $D$ is $\beta$-stable for some $0<\beta<1$.
Assume further that there exist $b_n> 0$ such that
\begin{align*}
b_nM(n) \WKonvergenz A_1,
\end{align*}
where $A_1$ is $1$-Fr\'echet. 
It follows from \eqref{asymptotischeUnab}, that for $y>0$, $x>0$
\begin{align*}
&n \cdot P(a_n W>y, b_n J>x)\\
&\hspace{5mm}= n\cdot P( W> a_n^{-1} y,J>b_n^{-1}x)\\
&\hspace{5mm}=n \left[ P(W> a_n^{-1}y,J>b_n^{-1}x)-P(W>a_n^{-1}y)\right] + n \cdot P(W>a_n^{-1}y)\\
&\hspace{5mm}\sim -m(b_n^{-1}x) \cdot n \cdot P(W>a_n^{-1} y)+ n \cdot P(W>a_n^{-1}y)\\
&\hspace{5mm} \xrightarrow[n \rightarrow \infty]{} 0,
\end{align*}
because $b_n^{-1}x \rightarrow \infty$, $a_n^{-1}y  \rightarrow \infty$ as $n \rightarrow \infty$ and $m(x)\rightarrow 1$ as $x \rightarrow \infty$ uniform on compact subsets. The uniform compact convergence of $m(x)$ is not mentioned in \eqref{asymptotischeUnab} in \cite{Pancheva}, but is used in the proof for the limit distribution. Furthermore it follows that
\begin{align*}
&n \cdot P(a_n W>y,b_n J\geq 0)\\
&\hspace{5mm}=n \cdot P(a_n W>y)\\
&\hspace{5mm} \xrightarrow[n \rightarrow \infty]{} \Phi_{D}((y,\infty)).
\end{align*}
In addition we obtain
\begin{align*}
&n \cdot P(a_n W \geq 0 ,b_n J >x)\\
&\hspace{5mm}=n \cdot P(b_n J>x)\\
&\hspace{5mm} \xrightarrow[n \rightarrow \infty]{} x^{-1}.
\end{align*}
Hence the L\'evy-exponent measure of the limit distribution 
is concentrated on the coordinate axes. In view of \eqref{DAind} this is equivalent to  $A_1$ and $D$ being independent.
\end{remark}

\section{Governing Equations and Examples}

In this section we derive the so called {\it governing equation} for the distribution functions of the CTRM and OCTRM scaling limits. Those equations are time fractional pseudo differential equations whose solutions are those CDFs. Moreover we show in two examples, by explicit computations, the usefulness of the results in Theorem \ref{Grenzwertverteilungsfunktion}. Even though equations \eqref{VerteilungsfunktionCTRM} and \eqref{VerteilungsfunktionOCTRM} provide explicit formulas, they depend
on the joint distribution of $(D(t),A(t))$ which is hardly ever known explicitly. However, by Theorem \ref{Grenzwertverteilungsfunktion} we have
 for all $\xi>0$ and $x \in \mathbb{R}$ 
\begin{align}
\intnu e^{-\xi t}G(t,x)dt = \frac{1}{\xi}\frac{\Psi_D(\xi)}{\Psi(\xi,x)}, \label{GovEq1}
\end{align} 
and
\begin{align}
\intnu e^{-\xi t} F(t,x) dt = \frac{1}{\xi} \frac{\Psi(\xi, x)+\log F_A(x)}{\Psi(\xi, x)}, \label{GovEq2}
\end{align}
where
\begin{align*}
&G(t,x):=G_t(x)=P\left(A(E(t)-)^{+} \leq x\right) \\
\end{align*} 
 and
\begin{align*}  
&F(t,x):=F_t(x)=P\left(A(E(t)) \leq x\right).
\end{align*}

In the following we denote with $L(f)(\xi)=\int_0^\infty e^{-\xi t}f(t)\, dt$ the usual Laplace transform of a bounded and measurable function $f$. 
Then \eqref{GovEq1} and \eqref{GovEq2} are equivalent to
\begin{align}
\Psi(\xi,x)L(G(\cdot,x))(\xi)=\frac{1}{\xi} \Psi_D(\xi) \label{GovEq3}
\end{align}
and
\begin{align}
\Psi(\xi,x)L(F(\cdot,x))(\xi)=\frac{1}{\xi}(\Psi(\xi,x)+\log F_A(x)).\label{GovEq4}
\end{align}
for all $\xi>0$ and $x_0<x<x_F$.

If we now apply in \eqref{GovEq3} and \eqref{GovEq4} on both sides the inverse Laplace transform and assume w.l.o.g. that D is a $\beta$-stable subordinator with Laplace transform $E(e^{-sD})=\exp(-s^{\beta})$, we obtain for the distribution function of the CTRM limit
\begin{align}
\Psi(\partial_{t},x) G(t,x)=\frac{t^{-\beta}}{\Gamma(1-\beta)} \label{GovEq5}
\end{align}
and in view of Lemma \ref{lemmaCTRM1} for the distribution function of the OCTRM limit
\begin{align}
\Psi(\partial_t,x)F(t,x)=\Phi((t,\infty),\left[-\infty,x\right]). \label{GovEq6}
\end{align}
Here $\Psi(\partial_t,x)$ is a pseudo-differential operator which is defined by
\begin{align}
L(\Psi(\partial_t,x)u(\cdot,x))(\xi)=\Psi(\xi,x)L(u(\cdot,x))(\xi).
\end{align}
for suitable functions $u(t,x)$. 
We call \eqref{GovEq5} and \eqref{GovEq6} the \textbf{governing equations} of the distribution functions of the CTRM and OCTRM scaling limits. We call the distribution functions of the scaling limits the \textbf{mild solution} of the governing equation, if they  fulfill \eqref{GovEq3} and \eqref{GovEq4}, respectively. 
The fractional derivative $\partial^{\alpha}f/dx^{\alpha}$ for $0<\alpha<1$ of  suitable functions $f:\Rplus \rightarrow \mathbb{R}$ is defined as the function whose Laplace transform equals  $\xi^{\alpha}L(f)(\xi)$ for $\xi>0$.

\begin{example}\ \\
We first consider the case when the jumps $J_i$ and the waiting times $W_i$ are independent and hence $\left\{D(t)\right\}_{t>0}$ and $\left\{A(t)\right\}_{t>0}$ are independent as well. In view of Corollary \ref{idepcase} we have $G(t,x)=F(t,x)$ for all $t,x$.
In this case we obtain a governing equation with fractional time derivative. In the case that $D$ and $A$ are independent we have
\begin{align*}
\Psi(\xi,x)=\Psi_D(\xi)-\log F_A(x).
\end{align*}
Let $D$ be  $\beta$-stable which Laplace transform is given by $E(e^{-\xi D})=\exp(-\xi^{\beta})$ for some $0<\beta<1$ and let $\left\{A(t)\right\}_{t>0}$ be a $F$-extremal process, where $F_A(x)=\exp(-x^{-\alpha})$ is a Fr\'echet distribution. Hence 
\begin{align*}
\Psi(\xi,x)=\xi^{\beta}+x^{-\alpha}.
\end{align*}
In this case we get for the CTRM in \eqref{GovEq1} 
\begin{align}
L(F(\cdot,x))(\xi)=\frac{\xi^{\beta-1}}{\xi^{\beta}+x^{-\alpha}}. \label{GovBSP11}
\end{align}
To get the solution, that is the distribution function $F(t,x)$ of the (O)CTRM scaling limit, we apply on the right hand side in \eqref{GovBSP11} the inverse Laplace transform. Observe that
\begin{align}\label{GovBSP12}
\begin{split}
\frac{\xi^{\beta-1}}{\xi^{\beta}+x^{-\alpha}}&=\int_{0}^{\infty}\xi^{\beta-1}  e^{-(\xi^{\beta}+x^{-\alpha})u}du\\
&=\int_{0}^{\infty}\xi^{\beta-1}e^{-\xi^{\beta}u} (F_A(x))^{u}du.
\end{split}
\end{align}
Let $g_{\beta}(t)$ denote the density of $D$. Since $L(g_{\beta}(t))(\xi)=e^{-\xi^{\beta}}$ we have (cf. p.3 in \cite{Meerschaert2002a}) 
\begin{align*}
e^{-\xi^{\beta}u}&=\int_{0}^{\infty} e^{-\xi u^{1/\beta}v} g_{\beta} (v) dv\\
&=\int_{0}^{\infty} e^{-\xi t} g_{\beta}(u^{-1/\beta}t) u^{-1/\beta} dt.
\end{align*}
But $d(e^{-\xi^{\beta}u})/d\xi=-\beta \xi^{\beta-1}ue^{-\xi^{\beta}u}$ and hence
\begin{align*}
\xi^{\beta -1}e^{-\xi^{\beta}u}&=-\frac{1}{\beta u} \frac{d}{d\xi} \left( \int_{0}^{\infty} e^{-\xi t} g_{\beta}(u^{-1/\beta}t)u^{-1/\beta}dt\right)\\
&=\frac{1}{\beta u} \int_{0}^{\infty}  t e^{-\xi t} g_{\beta}(u^{-1/\beta}t) u^{-1/\beta} dt.
\end{align*}
Together with \eqref{GovBSP12} we can therefore write \eqref{GovBSP11} as
\begin{align*}
L(F(\cdot,x))(\xi)&=\int_{0}^{\infty} \left( \frac{1}{\beta u} \int_{0}^{\infty} t e^{-\xi t} g_{\beta}(u^{-1/\beta})u^{-1/\beta}dt\right)(F_A(x))^{u} du\\
&=\int_{0}^{\infty} e^{-\xi t} \left(\int_{0}^{\infty} (F_A(x))^{u} g_{\beta}(u^{-1/\beta}t) \frac{t}{\beta}u^{-1/\beta-1}du\right)dt.
\end{align*}
 Inverting the Laplace transform yields
\begin{align}
F(t,x)=\int_{0}^{\infty} (F_A(x))^{u} g_{\beta}(u^{-1/\beta}t) \frac{t}{\beta} u^{-1/\beta-1}du.\label{VFuncoupled}
\end{align}
This distribution function coincides with the distribution function of the scaling limit of the uncoupled CTRM in \cite{Meerschaert2007a}. 
Rewrite \eqref{GovBSP11} as
\begin{align*}
(\xi^{\beta}+x^{-\alpha})L(F(\cdot,x))(\xi)=\xi^{\beta-1},
\end{align*}
to see that the  governing equation in this case is given by
\begin{align*}
\frac{\partial^{\beta}}{\partial t^{\beta}} F(t,x) + x^{-\alpha}F(t,x)=\frac{t^{-\beta}}{\Gamma(1-\beta)}.
\end{align*}
\end{example}

\begin{example}\label{CTRMBspZFrechet} \ \\
Let $W$ be $\beta$-stable for some $0<\beta<1$ such that $\mathbb E[e^{-sW}]=\exp(-s^\beta)$, $s>0$. Furthermore let $Z$ be $\gamma$-Fr\'echet for some $\gamma>0$ with $P(Z\leq x)=\exp(-x^{-\gamma})$, $x>0$. Assume that $W$ and $Z$ are independent and set $J=W^{1/\gamma}Z$. Let $(W_i,J_i)$ be iid copies of $(W,J)$. Then, in view of Example 4.3 in \cite{HeesScheffler1} \eqref{Annahme} holds with $a_n=n^{-1/\beta}$ and $b_n=n^{-1/\beta\gamma}$. Moreover 
\begin{align*}
\Psi(\xi,x)=(\xi+x^{-\gamma})^{\beta}
\end{align*}
for $\xi,x>0$.
For the distribution function of the CTRM scaling limit we have by \eqref{GovEq1} that
\begin{align}
L(G(\cdot,x))(\xi)= \frac{\xi^{\beta-1}}{(\xi +x^{-\gamma})^{\beta}}. \label{CTRMBSP2Gl1}
\end{align}
Using
\begin{itemize}
\item[(i)] $L(f \ast g)(\xi)=L(f)(\xi)L(g)(\xi)$,
\item[(ii)] $L^{-1}(\xi^{\beta-1})(t)=t^{-\beta}/\Gamma(1-\beta)$,
\item[(iii)] $L^{-1}\Big((\xi+x^{-\gamma})^{-\beta}\Big)(t)=t^{\beta-1}e^{-x^{-\gamma}t}/\Gamma(\beta)$,
\end{itemize}
we obtain for the distribution function of the CTRM scaling limit 
\begin{align*}\label{VFCTRMFrechetFall}
G(t,x)=P(A(E(t)-)^{+} \leq x )&=\int_{0}^{t} e^{-ux^{-\gamma}} \cdot \frac{u^{\beta -1}(t-u)^{-\beta}}{\Gamma(\beta)\Gamma(1-\beta)}du.
\end{align*}
This is the distribution function of a random variable $Y$, where $Y\overset{d}{=}B^{1/\gamma}Z$, with $B$ Beta  distributed on $[0,t]$ and with density
\begin{align*}
b(u)=\frac{1}{\Gamma(\beta)\Gamma(1-\beta)}u^{\beta-1}(t-u)^{-\beta}
\end{align*}
and $Z$ is $\gamma$-Fr\'echet distributed with
$P(Z\leq x)=e^{-x^{-\gamma}}$ and independent of $B$. Rewrite \eqref{CTRMBSP2Gl1} as
\begin{align}
(\xi+x^{-\gamma})^{\beta} L(G(\cdot,x))(\xi)=\xi^{\beta-1}.
\end{align}
Applying the inverse Laplace transform on both sides, using
\begin{itemize}
\item[(i)] $L(\partial_{t}^{\beta}f(t))(\xi)=\xi^{\beta}L(f)(\xi)$ \text{ and }
\item[(ii)] $L(e^{-at}f(t))(\xi)=L(f)(\xi+a)$
\end{itemize}
the governing equation reads
\begin{align}
e^{-tx^{-\gamma}} \partial_{t}^{\beta} \left[e^{tx^{-\gamma}}G(t,x)\right]=\frac{t^{-\beta}}{\Gamma(1-\beta)}.
\end{align}
For the distribution function of OCTRM scaling limit we have by \eqref{GovEq2} that
\begin{align}
L(F(\cdot,x))(\xi)=\frac{1}{\xi}\frac{(\xi+x^{-\gamma})^{\beta}-x^{-\beta \gamma}}{(\xi+x^{-\gamma})^{\beta}}.\label{LTOCTRMBSP2}
\end{align}
In view of Lemma \ref{lemmaCTRM1} we have
\begin{align*}
L^{-1}\Big(\xi^{-1}(\xi+x^{-\gamma})^{\beta}-x^{-\beta\gamma})\Big)(t)&=\Phi((t,\infty) \times [0,x]),
\end{align*}
where $\Phi$ is the L\'evy measure of $(D,A)$. From example 4.3 in \cite{HeesScheffler1} we know that
\begin{align*}
\Phi(dr,ds)=(r^{1/\gamma}\omega)(ds) \frac{\beta}{\Gamma(1-\beta)} r^{-\beta-1} dr.
\end{align*}
with and $\omega=P_{Z}$. Hence we obtain
\begin{align*}
\Phi((t,\infty) \times [0,x])&=\int_{t}^{\infty} \int_{0}^{x} (r^{1/\gamma} \omega)(ds) \frac{\beta}{\Gamma(1-\beta)} r^{-\beta-1} dr\\
&=\int_{t}^{\infty} F_{Z}(r^{-1/\gamma}x)  \frac{\beta}{\Gamma(1-\beta)} r^{-\beta-1}dr\\
&=\int_{t}^{\infty} \exp(-rx^{-\gamma})  \frac{\beta}{\Gamma(1-\beta)} r^{-\beta-1} dr.
\end{align*}
Using $L(f \ast g)(\xi)=L(f)(\xi)L(g)(\xi)$ again, we get the distribution function of the OCTRM scaling limit, namely
\begin{align}\label{VFOCTRMFrechetFall}
F(t,x)=\int_{0}^{t}\int_{u}^{\infty} \frac{(t-u)^{\beta-1}e^{-(t-u)x^{-\gamma}}}{\Gamma(\beta)\Gamma(1-\beta)}\exp(-rx^{-\gamma})\beta r^{-\beta-1}drdu.
\end{align}
Rewrite \eqref{LTOCTRMBSP2} as 
\begin{align*}
(\xi+x^{-\gamma})^{\beta} L(F(\cdot,x))(\xi)=\frac{1}{\xi}((\xi+x^{-\gamma})^{\beta}-x^{-\gamma \beta}),
\end{align*}
to obtain the governing equation
\begin{align}
e^{-tx^{-\gamma}} \partial_{t}^{\beta} \left[e^{tx^{-\gamma}}G(t,x)\right]=\int_{t}^{\infty} \exp(-rx^{-\gamma}) \frac{\beta}{\Gamma(1-\beta)} r^{-\beta-1}dr.
\end{align}
\end{example}

\end{document}